\documentclass[longbibliography]{amsart}
\usepackage[misc]{ifsym} 
\usepackage{lipsum}
\usepackage{amssymb}
\usepackage{amsmath}
\usepackage{latexsym}
\usepackage{bm}
\usepackage{enumitem}
\usepackage{graphicx}
\usepackage{amscd}
\usepackage{extarrows}
\usepackage{amsfonts}
\usepackage{amssymb}
\usepackage{indentfirst}
\usepackage{bbm}
\usepackage{mathdots}
\usepackage{mathtools}
\usepackage[linktocpage]{hyperref}
\usepackage{amsthm}
\hypersetup{
    colorlinks,
    citecolor=black,
    filecolor=black,
    linkcolor=blue,
    urlcolor=black
}

\usepackage{quiver}
\usepackage[multiple]{footmisc}
\usepackage[title]{appendix}

\newcommand\blfootnote[1]{%
  \begingroup
  \renewcommand\thefootnote{}\footnote{#1}%
  \addtocounter{footnote}{-1}%
  \endgroup
}

\setcounter{MaxMatrixCols}{20}
    
\usepackage{float}

\linespread{1.0}

\theoremstyle{plain}
\newtheorem{theorem}{Theorem}[section]
\newtheorem{proposition}[theorem]{Proposition}
\newtheorem{lemma}[theorem]{Lemma}
\newtheorem{corollary}[theorem]{Corollary}
\newtheorem*{theorem*}{Theorem}

\theoremstyle{remark}
\newtheorem{remark}[theorem]{Remark}

\theoremstyle{definition}
\newtheorem{definition}{Definition}[section]


\DeclareMathOperator{\Hom}{Hom}

\DeclareMathOperator{\tr}{tr}

\DeclareMathOperator{\covol}{covol}
\DeclareMathOperator{\vol}{vol}
\DeclareMathOperator{\Tr}{Tr}

\DeclareMathOperator{\id}{id}

\DeclareMathOperator{\SL}{SL}
\DeclareMathOperator{\GL}{GL}
\DeclareMathOperator{\PSL}{PSL}

\title{Von Neumann Dimensions and Trace Formulas I: Limit Multiplicities}

\author{Jun Yang}

\date{}

\begin{document}
\maketitle


\blfootnote{AMS 2010 Mathematics Subject Classification: 46L10,  20G05, 20G35. }
\blfootnote{\Letter~Jun Yang~~\href{mailto:junyang@fas.harvard.edu}{junyang@fas.harvard.edu}~~~Harvard University, Cambridge, MA, USA}
\blfootnote{This work was supported in part by the ARO Grant W911NF-19-1-0302 and the ARO MURI Grant W911NF-20-1-0082.}

\begin{abstract}
Given a connected semisimple Lie group $G$ and an arithmetic subgroup $\Gamma$, it is well-known that each irreducible representation $\pi$ of $G$ occurs in the discrete spectrum $L^2_{\text{disc}}(\Gamma\backslash G)$ of $L^2(\Gamma\backslash G)$ with at most a finite multiplicity $m_{\Gamma}(\pi)$. 
While $m_{\Gamma}(\pi)$ is unknown in general, we are interested in its limit as $\Gamma$ is taken to be in a tower of lattices $\Gamma_1\supset \Gamma_2\supset\dots$.
For a bounded measurable subset $X$ of the unitary dual $\widehat{G}$, we let $m_{\Gamma_n}(X)$ be the sum of the multiplicity $m_{\Gamma_n}(\pi)$ over all $\pi$ in $X$. 
Let $H_X$ be the direct integral of the irreducible representations in $X$ with respect to the Plancherel measure of $G$, which is also a module over the group von Neumann algebra $\mathcal{L}\Gamma_n$. 
We prove:
\begin{center}
$\lim\limits_{n\to \infty}\cfrac{m_{\Gamma_n}(X)}{\dim_{\mathcal{L}\Gamma_n}H_X}=1$,
\end{center}
for any bounded subset $X$ of $\widehat{G}$, 
when i) $\Gamma_n$'s are cocompact, or, ii) $G=\SL(n,\mathbb{R})$ and $\{\Gamma_n\}$ are principal congruence subgroups.   
\end{abstract}

\tableofcontents

\section{Introduction: an example on $\SL(2,\mathbb{R})$}

In this section, we introduce a multiplicity problem of square-integrable irreducible representations of $G=\SL(2,\mathbb{R})$ on $L^2_{\text{cusp}}(\Gamma\backslash G)$ for some arithmetic subgroups $\Gamma$.
It is one of the motivations of this article. 

We first $\Gamma=\SL(2,\mathbb{Z})$ and $\Gamma(N)$ be the principal congruence subgroup of level $n$ defined by
\begin{center}
    $\Gamma(N)=\Bigl\{ \begin{pmatrix}
a & b \\
c & d
\end{pmatrix}\in \SL(2,\mathbb{Z}): a,d\equiv 1 \pmod{N},b,c\equiv 0 \pmod{N}\Bigl\}$. 
\end{center}
Consider the right quasi-regular representation of $G$ on $L^2(\Gamma(N)\backslash G)$ given by $(R(g)\phi)(x)=\phi(xg)$ for $\phi\in L^2(\Gamma(N)\backslash G)$, $g\in G$. 
It is well known (see \cite{Kn97tf}) to be reducible and can be decomposed as
\begin{center}
   $L^2(\Gamma(N)\backslash G)=L^2_{\text{cusp}}(\Gamma(N)\backslash G)\oplus L^2_{\text{cont}}(\Gamma(N)\backslash G)\oplus \mathbb{C}$.   
\end{center}
Here $L^2_{\text{cusp}}(\Gamma(N)\backslash G)$ is the cuspidal part, which is a direct sum of irreducible representations with finite multiplicities, i.e.,
\begin{center}
  $L^2_{\text{cusp}}(\Gamma(N)\backslash G)=\sum m_{\Gamma(N)}(\pi)\cdot \pi$,~~~~$m_{\Gamma(N)}(\pi)<\infty$ for each $\pi$, 
\end{center}
and $L^2_{\text{cont}}(\Gamma(N)\backslash G)$ is a direct integral of irreducible representations given by the Eisenstein series. 

The multiplicities $m_{\Gamma(N)}(\pi)$ are still unknown in general, except for some special families of irreducible representations including the discrete series of $\SL(2,\mathbb{R})$ (see \cite{Kn} for an introduction of discrete series). 
Let $S_k(\Gamma)$ be the space of cusp forms of weight $k$ for a Fuchsian group $\Gamma$. 
We have the following result (see \cite{Gel75} Theorem 2.10). 
\begin{lemma}\label{lmsl2}
For the discrete series $\pi_k$, we have $m_{\Gamma(N)}(\pi_k)=\dim S_k(\Gamma(N))$. 
\end{lemma}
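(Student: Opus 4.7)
The plan is to identify each copy of the discrete series $\pi_k$ inside $L^2_{\text{cusp}}(\Gamma(N)\backslash G)$ with a holomorphic cusp form of weight $k$ via the standard lift from modular forms to automorphic functions.

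First, I would invoke the following structural fact about the (holomorphic) discrete series of $\SL(2,\mathbb{R})$: for $K=\SO(2)$, the minimal $K$-type of $\pi_k$ is one-dimensional, spanned by a lowest weight vector characterized by transforming under $K$ via $\kappa_\theta\mapsto e^{ik\theta}$ and being annihilated by the Lie algebra lowering element $E^-\in\mathfrak{sl}_2(\mathbb{C})$. Since $\pi_k$ is irreducible and its entire $(\mathfrak{g},K)$-module is generated from the lowest $K$-type by the raising operator, this gives the multiplicity formula
\begin{equation*}
\dim \Hom_G(\pi_k,V)=\dim\bigl\{v\in V^\infty: \pi(\kappa_\theta)v=e^{ik\theta}v,\ E^-\cdot v=0\bigr\}
\end{equation*}
for any unitary $G$-representation $V$.

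Applying this with $V=L^2_{\text{cusp}}(\Gamma(N)\backslash G)$, I would exhibit the explicit bijection between such lowest weight vectors and $S_k(\Gamma(N))$. Given $f\in S_k(\Gamma(N))$, define $\phi_f(g)=f(g\cdot i)\,j(g,i)^{-k}$, where $j\left(\begin{pmatrix} a & b \\ c & d\end{pmatrix},z\right)=cz+d$ is the usual automorphy factor and $g\cdot i$ is the action on $\mathbb{H}$. A direct computation shows that the weight-$k$ modular transformation law of $f$ is equivalent to $\Gamma(N)$-invariance of $\phi_f$ on the left, the $K$-equivariance $\phi_f(g\kappa_\theta)=e^{ik\theta}\phi_f(g)$ is automatic, and the holomorphicity of $f$ translates to $E^-\phi_f=0$ via the Cauchy--Riemann equations expressed in the Iwasawa decomposition $G=NAK$. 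Conversely, any such $\phi$ descends to a holomorphic function on $\mathbb{H}$.

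The third step is to match the cuspidality conditions. The vanishing of $f$ at each cusp of $\Gamma(N)\backslash\mathbb{H}$ is equivalent to the vanishing of the constant term of $\phi_f$ along each conjugate of the standard unipotent subgroup, which is exactly the condition defining $L^2_{\text{cusp}}$; a standard Petersson-inner-product computation then shows that $f\mapsto\phi_f$ is an isometry (up to a fixed constant) onto the lowest weight subspace of the $\pi_k$-isotypic component of $L^2_{\text{cusp}}(\Gamma(N)\backslash G)$. Comparing dimensions yields the claim. The main technical point is the careful treatment at the cusps, since $\Gamma(N)$ has several inequivalent cusps for $N\geq 2$ and each must be shown to contribute compatibly; once that is handled, the multiplicity formula $m_{\Gamma(N)}(\pi_k)=\dim S_k(\Gamma(N))$ follows.
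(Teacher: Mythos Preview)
Your proposal is correct and is precisely the standard argument; in fact the paper does not give its own proof of this lemma at all, but simply cites \cite{Gel75} Theorem~2.10, whose proof is exactly the lowest-weight-vector/lift construction you outline. So there is nothing to compare: you have supplied the proof the paper omits.
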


By applying the dimension formulas of cusp forms (see \cite{DiaShur05} Chapter 3.9), we obtain
\begin{equation}\label{e1}
    m_{\Gamma(N)}(\pi_k)=(\frac{k-1}{24}-\frac{1}{4N})N^3\prod_{p|N}(1-\frac{1}{p^2})
\end{equation}
for all $N>2$. 

On the other hand, let $H_k$ be the underlying Hilbert space of the discrete series $\pi_k$. 
As $H_k$ is a module over the group $\Gamma(N)$, we can further prove that it is also a module over the {\it group von Neumann algebra} $\mathcal{L}(\Gamma(N))$ (see Section \ref{ssvnalg} for the definition). 
Hence $H_k$ has a {\it von Neumann dimension} $\dim_{\mathcal{L}(\Gamma(N))}H_k$ over $\mathcal{L}(\Gamma(N))$. 
Indeed, if a discrete group $\Gamma$ is ICC (infinite conjugacy class, see also \ref{ssvnalg}), this dimension totally determines the equivalence class of $\mathcal{L}(\Gamma)$-module, i.e., $\dim_{\mathcal{L}(\Gamma)}H_1=\dim_{\mathcal{L}(\Gamma)}H_2$ if and only if $H_1,H_2$ are isomorphic as $\mathcal{L}(\Gamma)$-module. 

We consider a lattice $\Gamma$ in a Lie group $G$. 
Suppose $(\pi,H)$ is a discrete series representation of $G$ and 
let $d(\pi)$ be the formal dimension of $\pi$ (see \cite{Ro} Chapter 16). 
We have
\begin{lemma}[Goodman-la Harpe-Jones\cite{GHJ}]
$\dim_{\mathcal{L}(\Gamma)}H=\vol(\Gamma\backslash G)\cdot d(\pi)$  
\end{lemma}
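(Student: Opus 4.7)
The plan is to combine the Schur orthogonality relations for the discrete series with a matrix-coefficient embedding of $H$ into $L^2(G)$, and then to compute the von Neumann dimension as a semifinite trace of the projection onto the image. Fix a unit vector $e\in H$ and define $T\colon H\to L^2(G)$ by $T(w)(g)=\sqrt{d(\pi)}\,\langle e,\pi(g)w\rangle$. Schur orthogonality for the discrete series gives $\int_G |\langle e,\pi(g)w\rangle|^2\,dg=\|e\|^2\|w\|^2/d(\pi)=\|w\|^2/d(\pi)$, so $T$ is an isometry; the identity $T(\pi(\gamma)w)(g)=T(w)(g\gamma)$ shows that $T$ intertwines the $\mathcal{L}(\Gamma)$-action on $H$ (via $\pi|_\Gamma$) with right translation on $L^2(G)$.

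Next, I would realize $L^2(G)$ as a concrete $\mathcal{L}(\Gamma)$-module using a measurable fundamental domain $F\subset G$ for $\Gamma\backslash G$ with $\vol(F)=\vol(\Gamma\backslash G)$. The decomposition $G=\bigsqcup_{\gamma\in\Gamma}F\gamma$ yields a unitary $L^2(G)\cong L^2(F)\otimes\ell^2(\Gamma)$ under which right translation by $\gamma$ becomes $1\otimes\rho(\gamma)$, and the commutant of this action is the semifinite von Neumann algebra $B(L^2(F))\,\overline{\otimes}\,\mathcal{L}(\Gamma)$ equipped with trace $\Tr\otimes\tau$ (using that the right and left group von Neumann algebras of $\Gamma$ coincide as tracial von Neumann algebras). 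The standard dictionary for Hilbert modules then identifies $\dim_{\mathcal{L}(\Gamma)}H=\dim_{\mathcal{L}(\Gamma)}T(H)$ with $(\Tr\otimes\tau)(P)$, where $P$ is the orthogonal projection of $L^2(G)$ onto $T(H)$.

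The final step is to evaluate the trace via Schur orthogonality. Taking orthonormal bases $\{w_n\}\subset H$ and $\{e_i\}\subset L^2(F)$, and identifying $e_i\otimes\delta_e$ with $e_i$ extended by zero outside $F$, one has
\begin{equation*}
(\Tr\otimes\tau)(P)=\sum_{i,n}|\langle e_i\otimes\delta_e,T(w_n)\rangle|^2=\sum_n\int_F|T(w_n)(g)|^2\,dg=d(\pi)\int_F\sum_n|\langle\pi(g)^*e,w_n\rangle|^2\,dg,
\end{equation*}
and Parseval applied pointwise gives $\sum_n|\langle\pi(g)^*e,w_n\rangle|^2=\|\pi(g)^*e\|^2=1$, so $(\Tr\otimes\tau)(P)=d(\pi)\vol(F)=d(\pi)\vol(\Gamma\backslash G)$. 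The main obstacle is the bookkeeping in the middle step: one must match the $\mathcal{L}(\Gamma)$-module structure on $H$ (defined through $\pi|_\Gamma$) with the right regular $\Gamma$-action on $L^2(G)$, identify the commutant and its semifinite trace, and verify that the dimension-trace correspondence applies in this semifinite setting. Once this framework is in place, the Parseval--Schur calculation above is essentially forced.
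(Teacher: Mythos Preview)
Your argument is correct and is essentially the original proof in \cite{GHJ}, Theorem~3.3.2: embed $H$ into $L^2(G)$ via a normalized matrix coefficient, identify $L^2(G)\cong L^2(F)\otimes\ell^2(\Gamma)$ so that the $\Gamma$-action becomes $1\otimes\rho$, and compute the $(\Tr\otimes\tau)$-trace of the resulting projection by Parseval and Schur orthogonality. One cosmetic slip: you call $F$ a fundamental domain for $\Gamma\backslash G$ but then write $G=\bigsqcup_{\gamma}F\gamma$, which is a fundamental domain for $G/\Gamma$; since $G$ is unimodular the volumes agree and nothing in the computation changes.

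The paper does not give its own proof of this lemma; it is simply quoted from \cite{GHJ}. The paper's contribution in this direction is the more general Theorem~\ref{tdimmeas}, which replaces a single discrete series $\pi$ by an arbitrary measurable set $X\subset\widehat{G}$ of finite Plancherel measure and asserts $\dim_{\mathcal{L}(\Gamma)}H_X=\covol(\Gamma)\cdot\nu(X)$; the proof of that is deferred to another paper of the author. Your argument is exactly the atomic special case ($X=\{\pi\}$, $\nu(\{\pi\})=d(\pi)$), and indeed the general proof proceeds by the same template, replacing the single matrix-coefficient isometry by the Plancherel transform.
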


By  Example 3.3.4 in \cite{GHJ}, we know $\dim_{\mathcal{L}(\PSL(2,\mathbb{Z}))}H_k=\frac{k-1}{12}$.
As $\SL(2,\mathbb{Z})=(\mathbb{Z}/2\mathbb{Z})\rtimes \PSL(2,\mathbb{Z})$, 
we have $\dim_{\mathcal{L}(\SL(2,\mathbb{Z}))}H_k=\frac{k-1}{24}$. 
Since $[\Gamma\colon \Gamma(n)]=N^3\prod_{p|N}(1-\frac{1}{p^2})$, we can conclude 
\begin{equation}\label{e2}
    \dim_{\mathcal{L}(\Gamma(N))}H_k=\frac{k-1}{24}N^3\prod_{p|N}(1-\frac{1}{p^2}). 
\end{equation}
Thus we obtain:
\begin{corollary}\label{cexmp}
For a discrete series $(\pi_k,H_k)$ of $\SL(2,\mathbb{R})$, we have
\begin{center}
  $\lim_{N\to \infty}\cfrac{m_{\Gamma(N)}(\pi_k)}{\dim_{\mathcal{L}(\Gamma(N))}H_k}=1$.  
\end{center}
\end{corollary}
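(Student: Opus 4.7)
The plan is to simply divide equation (\ref{e1}) by equation (\ref{e2}) and observe that all of the delicate arithmetic content cancels, leaving a ratio whose non-trivial part is a single lower-order term in $N$.

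More concretely, the first step is to factor the common quantity $N^3\prod_{p\mid N}(1-\tfrac{1}{p^2})$ out of both the numerator and the denominator. This factor is exactly $[\Gamma\colon\Gamma(N)]$, so one may read this cancellation conceptually as the statement that both $m_{\Gamma(N)}(\pi_k)$ and $\dim_{\mathcal{L}(\Gamma(N))}H_k$ scale linearly with the covolume of $\Gamma(N)$ to leading order; the von Neumann dimension does so exactly, while the multiplicity does so up to a correction coming from the cusps of $\Gamma(N)\backslash\mathbb{H}$.

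After this cancellation, the ratio reduces to
\begin{equation*}
\frac{m_{\Gamma(N)}(\pi_k)}{\dim_{\mathcal{L}(\Gamma(N))}H_k}
=\frac{\tfrac{k-1}{24}-\tfrac{1}{4N}}{\tfrac{k-1}{24}}
=1-\frac{6}{(k-1)N},
\end{equation*}
valid for all $N>2$ by equation (\ref{e1}), and the second step is to let $N\to\infty$. Since $k\geq 2$ is fixed, the correction term $6/((k-1)N)$ tends to $0$, and the limit equals $1$, proving the corollary.

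There is no real obstacle here: the substance of the statement is carried entirely by Lemma \ref{lmsl2} (identifying $m_{\Gamma(N)}(\pi_k)$ with $\dim S_k(\Gamma(N))$) and by the Goodman--de la Harpe--Jones formula for the von Neumann dimension of a discrete series. What this toy computation really highlights, and what motivates the general theorem of the paper, is that the discrepancy $\tfrac{1}{4N}$ between the cusp-form dimension and the volume term $\tfrac{k-1}{24}[\Gamma\colon\Gamma(N)]$ is negligible relative to $[\Gamma\colon\Gamma(N)]$ as $N\to\infty$; the harder analogue of this negligibility for non-cocompact higher-rank lattices (and for arbitrary measurable $X\subset\widehat{G}$ in place of a single discrete series) is the actual content of the main theorem.
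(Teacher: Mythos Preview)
Your proof is correct and follows exactly the same approach as the paper: divide equation~\eqref{e1} by equation~\eqref{e2}, cancel the common factor $N^3\prod_{p\mid N}(1-p^{-2})$, and take the limit of the resulting expression $1-\tfrac{6}{(k-1)N}$ (equivalently $\tfrac{k-1-6/N}{k-1}$). The additional contextual commentary you provide is accurate and helpful, but the mathematical argument is identical to the paper's one-line proof.
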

\begin{proof}
Comparing Equations \ref{e1} and \ref{e2}, we obtain $\frac{m_{\Gamma(N)}(\pi_k)}{\dim_{\mathcal{L}(\Gamma(N))}H_k}=\frac{k-1-6/N}{k-1}$ and then take the limit. 
\end{proof}

While the explicit multiplicities of most irreducible representations are still unknown, 
the limit multiplicities have been studied since 1970s. 
In the case of towers of uniform lattices, 
DeGeorge and Wallach got the first results for discrete series of Lie groups  \cite{dGW78} and later for bounded sets of irreducible representations in rank one group \cite{dGW79}. 
Delorme \cite{Delm86} finally solved the problem for bounded sets of irreducible representations in all Lie groups. 
See also \cite{ABBGNRS17} for a recent approach. 

For the non-uniform lattices (or most arithmetic subgroups), Savin \cite{Sav89} obtained the results on discrete series in his thesis at first, which is based on the work by Rohlfs and Speh \cite{RoSp87}. 
Then Deitmar and Hoffmann proved the results on certain towers of arithmetic subgroups in rank one group. 
Recently, Finis and Lapid solved the case of congruence subgroups in $\SL(n,\mathbb{R})$ \cite{FLM15,FLM18}, which are based on their study of the spectral side of Arthur's trace formulas \cite{FLM11a,FLM11b}. 

The goal of this paper is to extend Corollary \ref{cexmp} to some general settings. 
In the rest of this paper, we generalize this result mainly in the following aspects:
\begin{enumerate}
    \item from a single discrete series representation to any bounded subset of the unitary dual $\widehat{G}$ of $G$;
    \item from $\SL(2,\mathbb{R})$ to the towers of uniform lattices in an arbitrary semisimple Lie group,
    \item from $\SL(2,\mathbb{R})$ to  $\SL(n,\mathbb{R})$ with its the principal congruence subgroups. 
\end{enumerate} 
Finally, we are able to prove:
\begin{theorem}[\textbf{The Main Theorem}]
Let $G$ be a semisimple simply-connected Lie group. 
Let $X$ be a bounded subset of the unitary dual of $G$ and 
$H_X$ be the direct integral of the irreducible representations of $G$ in $X$. 
We have:
\begin{center}
$\lim\limits_{n\to \infty}\cfrac{m_{\Gamma_n}(X)}{\dim_{\mathcal{L}\Gamma_n}H_X}=1$,
\end{center}
when i) $\Gamma_n$'s are cocompact, or ii) $G=\SL(n,\mathbb{R})$ and $\{\Gamma_n\}$ are principal congruence subgroups.  
\end{theorem}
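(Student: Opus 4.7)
The plan is to decouple the theorem into two asymptotic evaluations. The numerator is the object of the classical limit multiplicity problem, while the denominator, after a von Neumann dimension calculation, should become an \emph{exact} expression involving covolume and Plancherel measure. Specifically, I would aim to establish the identity
\[
\dim_{\mathcal{L}\Gamma}H_X \;=\; \vol(\Gamma\backslash G)\cdot \mu_{\mathrm{Plan}}(X)
\]
for every lattice $\Gamma$ in $G$ and every bounded measurable $X\subseteq \widehat{G}$, and then invoke the known limit multiplicity theorems to evaluate the numerator asymptotically against the same quantity.

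For the dimension identity, the strategy is to generalize the Goodman-la Harpe-Jones lemma recalled above. That lemma handles a single discrete series $\pi$, where $\mu_{\mathrm{Plan}}(\{\pi\}) = d(\pi)$, so the identity holds tautologically. For arbitrary bounded $X$, I would realize $H_X$ inside $L^2(G)$ via the Plancherel decomposition
\[
L^2(G) \;\cong\; \int_{\widehat{G}}^{\oplus} H_\pi\otimes H_\pi^{*}\,d\mu_{\mathrm{Plan}}(\pi),
\]
and compute the canonical trace on $\mathcal{L}\Gamma$ of the spectral projection onto the $X$-component. The covolume factor comes from the identification $\dim_{\mathcal{L}\Gamma}L^2(G) = \vol(\Gamma\backslash G)$ for a lattice $\Gamma$ in a unimodular group, and the Plancherel weight of $X$ comes from integrating the matrix-coefficient trace across the direct integral.

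For the numerator, I would invoke the appropriate limit multiplicity theorem to obtain
\[
\lim_{n\to\infty}\frac{m_{\Gamma_n}(X)}{\vol(\Gamma_n\backslash G)} \;=\; \mu_{\mathrm{Plan}}(X).
\]
In case (i) this is Delorme's theorem (generalizing DeGeorge-Wallach) for bounded subsets of $\widehat{G}$ and towers of cocompact lattices. In case (ii) the corresponding statement for principal congruence subgroups of $\SL(n,\mathbb{R})$ is the Finis-Lapid theorem, obtained from their control of the spectral side of Arthur's trace formula. Dividing this asymptotic by the exact dimension identity from the previous step gives the claim.

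The main obstacle is the dimension identity when $X$ meets the non-discrete spectrum. A single non-discrete representation does not sit inside $L^2(G)$ as a submodule, so one cannot simply apply Goodman-la Harpe-Jones pointwise and sum; the computation must proceed in terms of spectral projections and Plancherel densities, keeping track of how the direct integral interacts with the type II$_1$ trace on $\mathcal{L}\Gamma$. A secondary technical issue is case (ii), where $\Gamma_n$ is non-uniform: one must verify that $H_X$ is still a well-defined $\mathcal{L}\Gamma_n$-module of finite von Neumann dimension despite the absence of a nice square-integrable regular representation on all of $\Gamma_n\backslash G$, presumably by first isolating the discrete (cuspidal plus residual) spectrum and then matching it against the Plancherel side of $G$ via a trace formula argument consistent with the title of the paper.
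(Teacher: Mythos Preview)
Your approach matches the paper's exactly: the paper proves the dimension identity $\dim_{\mathcal{L}\Gamma}H_X=\covol(\Gamma)\cdot\nu(X)$ for arbitrary lattices and bounded $X$ (Theorem~\ref{tdimmeas}), then reduces the theorem to the limit multiplicity statement $\nu_{K_n}(X)\to\nu(X)$ and divides. The only cosmetic difference is that instead of citing Delorme and Finis--Lapid as black boxes, the paper runs the limit multiplicity argument itself via Sauvageot's density theorem (to reduce to test functions $\widehat\phi$) together with the trace formula (Proposition~\ref{pcpttr=} for the cocompact case, and the Finis--Lapid--M\"uller spectral estimate plus the geometric-side Corollary~\ref{cgeolimit} for $\SL(n)$); the content is the same.

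One point of confusion in your last paragraph: the ``secondary technical issue'' you raise is not an issue. The module $H_X$ is cut out of $L^2(G)$ by Plancherel, not out of $L^2(\Gamma_n\backslash G)$, so whether $\Gamma_n$ is uniform plays no role whatsoever in the definition or finiteness of $\dim_{\mathcal{L}\Gamma_n}H_X$; the dimension formula holds for any lattice in a unimodular type~I group. The non-uniformity affects only the numerator $m_{\Gamma_n}(X)$, and that is exactly what the Finis--Lapid--M\"uller input handles.
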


\section{The trace formulas and dominant terms}\label{stf}
 
We have a brief review of the Arthur-Selberg trace formulas and give the dominant terms in these formulas. 
We mainly follow \cite{ArtI,ArtII,FLM11}.

Let $\mathbf{G}$ be a reductive group over $\mathbb{Q}$. 
The group $G(\mathbb{A})$ acts naturally on $L^2(G(\mathbb{Q})\backslash G(\mathbb{A}))$ by 
\begin{center}
$R(g)\phi(x)=\phi(xg)$ 
\end{center}
for $\phi\in L^2(G(\mathbb{Q})\backslash G(\mathbb{A}))$ and $g\in G(\mathbb{A}))$.
Let $C^{\infty}_{\text{c}}{(G(\mathbb{A}))}$ be the complex algebra of smooth, compactly supported function on $G(\mathbb{A}))$.
Given $f\in C^{\infty}_{\text{c}}{(G(\mathbb{A}))}$, we may define
\begin{center}
    $(R(f)\phi)(x)=\int_{G(\mathbb{A}))}f(g)R(g)\phi(x)dg=\int_{G(\mathbb{A}))}f(g)\phi(xg)dg$. 
\end{center}
If we define the {\it kernel}
\begin{center}
$K(x,y)=K_f(x,y)\colon=\sum\limits_{\gamma\in G(\mathbb{Q})}f(x^{-1}\gamma y)$,
\end{center}
we have $(R(f)\phi)(x)=\int_{G(\mathbb{Q})\backslash G(\mathbb{A})}K(x,y)\phi(y)dy$. 

\subsection{The Selberg trace formula}\label{sstfS}

We first assume $\mathbf{G}$ is anisotropic and hence the quotient space $G(\mathbb{Q})\backslash G(\mathbb{A})$ is compact. 
Let $\mathcal{O}$ be the set of conjugacy classes in $G(\mathbb{Q})$ and $o\in \mathcal{O}$ be a conjugacy class. 
We may define
\begin{center}
    $K_o(x,y)=\sum\limits_{\gamma\in o}f(x^{-1}\gamma y)$
\end{center}
and obtain $K(x,y)=\sum\limits_{o\in \mathcal{O}} K_o(x,y)$. 
On the other hand, the representation $R$ decomposes into a direct sum of irreducible representations with finite multiplicities, i.e.,
$L^2(G(\mathbb{Q})\backslash G(\mathbb{A}))=\oplus_{\chi\in\mathcal{X}}L^2(G(\mathbb{Q})\backslash G(\mathbb{A}))_{\chi}$. 
Here $L^2(G(\mathbb{Q})\backslash G(\mathbb{A}))_{\chi}=m(\chi)\cdot \chi$, which is $m(\chi)$ copies of the irreducible representation $\chi$. 
Assume $\mathcal{B}_{\chi}$ is a orthonormal basis of $L^2(G(\mathbb{Q})\backslash G(\mathbb{A}))_{\chi}$. 
Then 
\begin{center}
$K_{\chi}(x,y)=K_{f,\chi}(x,y)\colon=\sum_{\phi\in \mathcal{B}_{\chi}}(R(f))\phi(x)\cdot\overline{\phi(y)}$   
\end{center}
converges. 
Now we let
\begin{enumerate}
\item $k_{\chi}(x,f)=K_{\chi}(x,x)$ and $J_{\chi}(f)=\int_{G(\mathbb{Q})\backslash G(\mathbb{A})}k_{\chi}(x,f)dx$,
    \item $k_o(x,f)=K_o(x,x)$ and $J_{o}(f)=\int_{G(\mathbb{Q})\backslash G(\mathbb{A})}k_o(x,f)dx$.
\end{enumerate}
If we let ${\gamma}$ be a representatives of $o\in \mathcal{O}$ and $H_{\gamma}=\{h\in H|h\gamma h^{-1}=\gamma\}$ for a group $H$ containing $\gamma$, we get
\begin{center}
$J_o(f)=\vol(G(\mathbb{Q})_{\gamma}\backslash G(\mathbb{A})_{\gamma})\int_{G(\mathbb{A})_{\gamma}\backslash G(\mathbb{A})}f(x^{-1}\gamma x)dx$. 
\end{center}

\begin{theorem}\label{ttfS}
Assuming $G(\mathbb{Q})\backslash G(\mathbb{A})$ is compact, we have
\begin{equation}\label{etfcpt}
\begin{aligned}
    \tr R(f)=\sum\limits_{o\in O}J_o(f)=\sum\limits_{\chi\in \mathcal{X}}J_{\chi}(f)
\end{aligned}
\end{equation}
for any $f\in C^{\infty}_c{(G(\mathbb{A}))}$.
\end{theorem}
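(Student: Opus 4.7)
The plan is to identify both sides of \eqref{etfcpt} with $\int_{G(\mathbb{Q})\backslash G(\mathbb{A})}K(x,x)\,dx$, exploiting the fact that on a compact quotient a trace-class integral operator with continuous kernel has trace equal to the integral of the kernel along the diagonal.

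First I would establish that $R(f)$ is trace class and that $\tr R(f)=\int K(x,x)\,dx$. Writing $f=f_1*f_2$ as a convolution of two functions in $C^\infty_c(G(\mathbb{A}))$ (via an approximate identity), each $R(f_i)$ has continuous kernel $K_{f_i}$ on the compact square $(G(\mathbb{Q})\backslash G(\mathbb{A}))^2$ and is therefore Hilbert--Schmidt; hence $R(f)=R(f_1)R(f_2)$ is trace class. Compact support of $f$ together with compactness of $G(\mathbb{Q})\backslash G(\mathbb{A})$ and discreteness of $G(\mathbb{Q})$ in $G(\mathbb{A})$ forces the defining sum $K(x,y)=\sum_{\gamma\in G(\mathbb{Q})}f(x^{-1}\gamma y)$ to be locally finite, so $K$ is continuous, and a Mercer-type argument yields the claimed trace identity.

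For the geometric side I would partition $G(\mathbb{Q})=\bigsqcup_{o\in\mathcal{O}}o$ so that $K=\sum_o K_o$; local finiteness of the sum on compacta allows the interchange
\begin{equation*}
\int_{G(\mathbb{Q})\backslash G(\mathbb{A})} K(x,x)\,dx=\sum_{o\in\mathcal{O}}\int_{G(\mathbb{Q})\backslash G(\mathbb{A})} K_o(x,x)\,dx=\sum_{o\in\mathcal{O}}J_o(f).
\end{equation*}
Given a representative $\gamma$ of $o$, I would parametrize $o\cong G(\mathbb{Q})_\gamma\backslash G(\mathbb{Q})$ and unfold the sum over this quotient against the integral over $G(\mathbb{Q})\backslash G(\mathbb{A})$ to recover the orbital-integral form $\vol(G(\mathbb{Q})_\gamma\backslash G(\mathbb{A})_\gamma)\cdot\int_{G(\mathbb{A})_\gamma\backslash G(\mathbb{A})}f(x^{-1}\gamma x)\,dx$ stated immediately before the theorem.

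For the spectral side, compactness of $G(\mathbb{Q})\backslash G(\mathbb{A})$ together with trace-classness of $R(f)$ yields the discrete isotypic decomposition $L^2(G(\mathbb{Q})\backslash G(\mathbb{A}))=\bigoplus_{\chi\in\mathcal{X}}L^2(G(\mathbb{Q})\backslash G(\mathbb{A}))_\chi$, with each summand $R$-invariant and of finite multiplicity, and the corresponding kernel decomposition $K=\sum_\chi K_\chi$. Summing the traces of the restrictions and expanding each one via the orthonormal basis $\mathcal{B}_\chi$ gives
\begin{equation*}
\tr R(f)=\sum_{\chi\in\mathcal{X}}\tr\bigl(R(f)|_{L^2(G(\mathbb{Q})\backslash G(\mathbb{A}))_\chi}\bigr)=\sum_{\chi\in\mathcal{X}}\sum_{\phi\in\mathcal{B}_\chi}\langle R(f)\phi,\phi\rangle=\sum_{\chi\in\mathcal{X}}\int K_\chi(x,x)\,dx=\sum_{\chi\in\mathcal{X}} J_\chi(f).
\end{equation*}

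The main obstacle is justifying the various interchanges of summation and integration, in particular the pointwise convergence of $\sum_\chi K_\chi(x,x)$, which is not automatic from the Hilbert-space convergence. The standard device is to first prove the identity when $f$ has the form $h*h^*$, so that $R(f)$ is positive trace class and every term on both sides is nonnegative, allowing monotone and Fubini-type arguments; bilinearity of the trace in $f$ then extends the identity to general $f\in C^\infty_c(G(\mathbb{A}))$.
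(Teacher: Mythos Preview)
The paper does not actually prove this theorem: Section~\ref{stf} is explicitly presented as ``a brief review of the Arthur--Selberg trace formulas,'' and Theorem~\ref{ttfS} is simply stated as the classical compact-quotient result, with the surrounding discussion and the references \cite{ArtI,ArtII,FLM11} standing in for a proof. So there is no argument in the paper to compare your proposal against.

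Your outline is the standard proof and is essentially correct. One small point worth tightening: an arbitrary $f\in C^\infty_c(G(\mathbb{A}))$ need not be a \emph{single} convolution $f_1*f_2$; what Dixmier--Malliavin (at the archimedean places, combined with the trivial factorization of locally constant functions at the finite places) gives is a \emph{finite sum} of such convolutions, which is still enough to conclude that $R(f)$ is trace class. Alternatively, you can bypass this by arguing directly that a continuous kernel on a compact space is Hilbert--Schmidt and then running the positivity/polarization argument you already sketch in your last paragraph. Either way, the structure of your proof---identify both expansions with $\int K(x,x)\,dx$, justify the interchanges first for $f=h*h^*$ and extend by polarization---is exactly the classical route.
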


For the classical setting, 
we start with a real Lie group $G$ and a lattice $\Gamma\subset G$. 
Consider the representation $R_{\Gamma}$ of $G$ on $L^2(\Gamma\backslash G)$ given by $(R_{\Gamma}(g)\phi)(x)=\phi(xg)$ for $x,g\in G$. 
Let $C^{\infty}_{c}(G)$ be the space of smooth function on $G$ with compact support. 
For $f\in C^{\infty}_{c}(G)$ and a representation $(\pi,H)$ of $G$, we let
\begin{center}
    $\pi(f)v=\int_G f(g)\pi(g)v dg$.
\end{center}
If $\pi$ is irreducible, $\pi(f)$ is a trace class operator and we let $\theta_{\pi}(f)=\tr\pi(f)$. 
Note for the representation $R_{\Gamma}$, we have $(R_{\Gamma}(f)\phi)(x)=\int_G f(g)R_{\Gamma}(g)\phi(x)dg=\int_G f(g)\phi(xg)dg$. 

It is known that $\Gamma\backslash G$ is compact if and only if the reductive part of $\mathbf{G}$ is anisotropic (see \cite{PR94} Theorem 4.12). 
In this case, $L^2(\Gamma\backslash G)$ can be decomposed into a direct sum of irreducible representations of $G$ with each of finite multiplicity, i.e., 
\begin{center}
    $L^2(\Gamma\backslash G)=\oplus m_{\Gamma}(\pi)\cdot \pi$
\end{center}
with $m_{\Gamma}(\pi)=\dim \Hom_G(\pi,L^2(\Gamma\backslash G))<\infty$ for each $\pi$.

By taking the test function in Theorem \ref{ttfS} to be $f\otimes 1_K$ for a maximal compact subgroup $K$ of $G(\mathbb{A}^{\text{fin}})$ with $f\in C^{\infty}_c(G)$ (see Section \ref{ssmult}), we get the following result for the lattice $\Gamma$ in the real Lie group $G$. 
\begin{corollary}[The Selberg trace formula]\label{ctfS}
If $\Gamma\backslash G$ is compact, $R_{\Gamma}(f)$ is of trace class and 
\begin{equation}\label{etflie}
    tr R_{\Gamma}(f)=\sum_{\pi\in \widehat{G}}m_{\Gamma}(\pi)\theta_{\pi}(f)=\sum_{\gamma\in [\Gamma]}\vol(\Gamma_{\gamma}\backslash G_{\gamma})\int_{\Gamma_{\gamma}\backslash G}f(x^{-1}\gamma x)dx
\end{equation}

\end{corollary}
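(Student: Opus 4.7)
The plan is to derive Corollary \ref{ctfS} as a specialization of Theorem \ref{ttfS} by exploiting strong approximation. I would begin by choosing a $\mathbb{Q}$-algebraic group $\mathbf{G}$ with $\mathbf{G}(\mathbb{R}) = G$ (up to connected components) and a compact open subgroup $K \subset G(\mathbb{A}^{\text{fin}})$ such that $\Gamma = G(\mathbb{Q}) \cap (G \times K)$; when $\mathbf{G}$ is semisimple and simply connected, strong approximation yields a clean identification
\[
G(\mathbb{Q})\backslash G(\mathbb{A}) / K \;\cong\; \Gamma \backslash G,
\]
and in general a finite disjoint union of such quotients indexed by a class set. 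The test function $\tilde{f} = f \otimes 1_K \in C^\infty_c(G(\mathbb{A}))$ then has the property that $R(\tilde{f})$ preserves $L^2(G(\mathbb{Q})\backslash G(\mathbb{A}))^K$ and, under the identification above, is intertwined with $R_\Gamma(f)$.

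For the spectral side, I would decompose each automorphic representation as a restricted tensor product $\chi = \chi_\infty \otimes \chi^{\text{fin}}$. Only the classes with $(\chi^{\text{fin}})^K \neq 0$ contribute to the $K$-invariant part, and grouping by the archimedean component $\pi$ yields the multiplicity identity $m_\Gamma(\pi) = \sum_{\chi_\infty = \pi} m(\chi)\dim(\chi^{\text{fin}})^K$. Since $\chi^{\text{fin}}(1_K)$ acts as the projection onto $(\chi^{\text{fin}})^K$ with trace $\dim(\chi^{\text{fin}})^K$, the distributions $J_\chi(\tilde f)$ reorganize into $\sum_\pi m_\Gamma(\pi)\theta_\pi(f)$, matching the spectral sum in \eqref{etflie}.

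For the geometric side, I would note that only $o \in \mathcal{O}$ meeting $G \times K$ contribute, so one may choose representatives $\gamma \in \Gamma$. Using $\tilde f = f \otimes 1_K$, the orbital integral $J_o(\tilde f)$ factors into an archimedean and a non-archimedean piece. The non-archimedean piece, combined with the adelic covolume $\vol(G(\mathbb{Q})_\gamma\backslash G(\mathbb{A})_\gamma)$ via Fubini and the identification $G(\mathbb{Q})_\gamma \cap (G_\gamma \times K_\gamma) = \Gamma_\gamma$, collapses to $\vol(\Gamma_\gamma\backslash G_\gamma)$. Summing the adelic conjugacy classes then reorganizes into the sum over $[\gamma] \in [\Gamma]$ on the right of \eqref{etflie}.

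The trace-class statement is the easy part: since $f$ has compact support and $\Gamma$ is discrete in $G$, the kernel $K_f(x,y) = \sum_{\gamma\in\Gamma} f(x^{-1}\gamma y)$ is a locally finite sum of smooth functions, hence smooth on the compact manifold $(\Gamma\backslash G)^2$, making $R_\Gamma(f)$ an integral operator with smooth kernel on a compact space. I expect the main technical hurdle to be the bookkeeping around strong approximation and the volume comparison in the geometric side—particularly when $G(\mathbb{Q})\backslash G(\mathbb{A})/K$ is not connected and one must sum over a class set—rather than anything deep; everything else is a routine repackaging of the identities already packaged in Theorem \ref{ttfS}.
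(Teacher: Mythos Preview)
Your proposal is correct and follows exactly the approach the paper indicates: the paper's justification is the single sentence preceding the corollary, namely ``By taking the test function in Theorem \ref{ttfS} to be $f\otimes 1_K$ for a maximal compact subgroup $K$ of $G(\mathbb{A}^{\text{fin}})$ with $f\in C^{\infty}_c(G)$ (see Section \ref{ssmult}),'' and your outline simply spells out the bookkeeping (strong approximation, factoring of orbital integrals, and the multiplicity identity $m_{\Gamma}(\pi)=\sum_{\chi_\infty=\pi} m(\chi)\dim(\chi^{\text{fin}})^K$) that the paper defers to Section~\ref{ssmult}. Nothing further is needed.
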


\subsection{The Arthur trace formula}\label{sstfA}

We assume $\mathbf{G}$ is not necessarily anisotropic and $G(\mathbb{Q})\backslash G(\mathbb{A})$ may not be compact. 
Assume $B$ is a Borel subgroup defined over $\mathbb{Q}$,  
$M_0$ is a Levi factor of $B$ defined over $\mathbb{Q}$, 
$P$ is a standard parabolic subgroup defined over $\mathbb{Q}$ (i.e., $P_0=B\subset P$), 
$N_P=R_u(P)$ (the unipotent radical of $P$), 
$M_P$ is the unique Levi component of $P$ such that $M_0\subset M_P$. 

We also assume $A_P$ is the split component of the center of $M_P$ and $Z=A_G$, $\Delta_0=\Delta_B$ is a base for a root system. 
We will mostly use the notations of \cite{ArtI,ArtII,ArtIntro} and \cite{GelTF} as follows:

\begin{itemize}
\small
\item $\mathfrak{a}_P=\Hom(X(M_P)_{\mathbb{Q}},\mathbb{R})$ where $X(M_P)_{\mathbb{Q}}$ is the $\mathbb{Q}$-characters of $M_P$, $\mathfrak{a}_P^*=X(M_P)_{\mathbb{Q}}\otimes \mathbb{R}$ and $\mathfrak{a}_P^+=\{H\in \mathfrak{a}_P|\alpha(H)>0,\forall \alpha\in \Delta_P\}$. 
\item $\gamma=\gamma_s \gamma_u$, which is the decomposition such that $\gamma_s$ is semisimple and $\gamma_u$ is unipotent. 
 \item $\mathcal{O}$ is the set of $G(\mathbb{Q})$-semisimple conjugacy class of $G(\mathbb{Q})$ ($\gamma\cong \beta$ if $\gamma_s$ and $\beta_s$ are $G(\mathbb{Q})$-conjugate). 
 \item $o\in \mathcal{O}$ is a conjugacy class in $G(\mathbb{Q})$.
 \item $\mathcal{X}$ is the set of equivalence classes of pairs $(M,\rho)$, where $M$ is a Levi subgroup of $G$ and $\rho\in \widehat{M(\mathbb{A})^1}$ ($(M,\rho)\sim (M',\rho')$ if there is an $s\in \Omega(\mathfrak{a},\mathfrak{a}')$ such that the representation $(s\rho)(m')=\rho(w_s^{-1}m w_s)$ is unitarily equivalent to $\rho'$). 
\item For a pair of parabolic groups $P_1\subset P_2$, $\Delta^{P_2}_{P_1}$ is the set of simple roots of $(M_{P_2}\cap P_1,A_{P_1})$ and $\hat{\Delta}^{P_2}_{P_1}=\{\varpi_{\alpha}|\alpha\in \Delta^{P_2}_{P_1}\}$, i.e, the dual basis for $\Delta^{P_2}_{P_1}$. 
\item $\hat{\tau}_P$ is the characteristic function on $\mathfrak{a}_0$ of $\{H\in \mathfrak{a}_0| \varpi(H)>0,\varpi\in \hat{\Delta}^G_P\}$. 
\item For $m=\prod_v m_v\in M(\mathbb{A})$, let $H_M(m)\in \mathfrak{a}_p$ given by 
\begin{center}
    $e^{\langle H_M(m),\chi\rangle}=|\chi(m)|=\prod\limits_{v}|\chi(m_v)|_v$, $\forall \chi\in X(M)_{\mathbb{Q}}$. 
\end{center}
    \item $x=nmak\in G(\mathbb{A})$ with $n\in G(\mathbb{A}), m\in M(\mathbb{A})^1,a\in A(\mathbb{R})^0$ and $k\in K$.
    \item $H(x)=H_M(ma)=H_M(a)\in \mathfrak{a}_p$. 
\end{itemize}

Let $T\in \mathfrak{a}_0^+$ be suitably regular, i.e., $\alpha(T)$ is sufficiently large for all $\alpha\in \Delta_0$. 
For a parabolic subgroup $P$, 
there are kernels $K_{P,o}=\sum\limits_{\gamma\in M(\mathbb{Q})\cap o}\int_{N(\mathbb{A})}f(x^{-1}\gamma n y)dn$ 
and $K_{P,\chi}$ (see \cite{ArtI} p.923 and p.935 for the precise definitions). 
Then Arthur is able to define the {\it truncated kernels} and distributions $J^T_{o},J^T_{\chi}$ as follows:
\begin{enumerate}
\item $k^T_{o}(x,f)=\sum\limits_{P}(-1)\dim(A_P/Z)\sum\limits_{\delta\in P(\mathbb{Q})\backslash G(\mathbb{Q})}K_{P,o}(\delta x,\delta x)\cdot \hat{\tau}_p(H(\delta x)-T)$. 
    \item $k^T_{\chi}(x,f)=\sum\limits_{P}(-1)\dim(A_P/Z)\sum\limits_{\delta\in P(\mathbb{Q})\backslash G(\mathbb{Q})}K_{P,\chi}(\delta x,\delta x)\cdot \hat{\tau}_p(H(\delta x)-T)$. 
    \item $J^T_{o}(f)=\int_{G(\mathbb{Q})\backslash G(\mathbb{A})^1}k^T_{o}(x,f)dx$. 
    \item $J^T_{\chi}(f)=\int_{G(\mathbb{Q})\backslash G(\mathbb{A})^1}k^T_{\chi}(x,f)dx$. 
\end{enumerate}
Let $\mathcal{X}(G)=\{(M,\rho) \in \mathcal{X}|M=G\}$.
We reach a coarse trace formula, which is firstly given in \cite{ArtII} Chapter 5. 
\begin{theorem}[The Arthur trace formula]\label{tAtf}
For any $f\in C^{\infty}_{c}(G(\mathbb{A})^1)$ and any suitably regular $T\in \mathfrak{a}_0^+$, we have
\begin{equation}\label{etf}
\begin{aligned}
\sum\limits_{o\in\mathcal{O}}J^T_{o}(f)=\sum\limits_{\chi\in\mathcal{X}}J^T_{\chi}(f) 
\end{aligned}
\end{equation}
Moreover, the trace formula of $R(f)$ is given by
\begin{center}
    $\tr R_{\text{cusp}}(f)=\sum\limits_{o\in\mathcal{O}}J^T_{o}(f)-\sum\limits_{\chi\in\mathcal{X}\backslash \mathcal{X}(G)}J^T_{\chi}(f)$.
\end{center}
\end{theorem}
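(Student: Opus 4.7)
The plan is to follow Arthur's original strategy from \cite{ArtI,ArtII}, taking as the starting point the kernel $K(x,y) = \sum_{\gamma \in G(\mathbb{Q})} f(x^{-1}\gamma y)$ already introduced in the excerpt. This kernel admits two natural decompositions: a \emph{geometric} expansion $K(x,y) = \sum_{o\in\mathcal{O}} K_o(x,y)$ obtained by grouping by semisimple $G(\mathbb{Q})$-conjugacy class, and a \emph{spectral} expansion $K(x,y) = \sum_{\chi\in\mathcal{X}} K_\chi(x,y)$ coming from the Langlands decomposition of $L^2(G(\mathbb{Q})\backslash G(\mathbb{A}))$ indexed by cuspidal data $(M,\rho)$. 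When $\mathbf{G}$ is anisotropic one integrates $K(x,x)$ directly to get Theorem \ref{ttfS}; when $\mathbf{G}$ is isotropic, $K(x,x)$ fails to be integrable on $G(\mathbb{Q})\backslash G(\mathbb{A})^1$ due to the cusps, which is precisely why Arthur's truncation procedure is needed.

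First I would establish the analogous decompositions for each standard parabolic $P$, namely $K_P(x,y) = \sum_o K_{P,o}(x,y) = \sum_\chi K_{P,\chi}(x,y)$, together with the combinatorial identity that ensures the alternating sum over $P$ weighted by $\hat{\tau}_p(H(\delta x)-T)$, with $\delta \in P(\mathbb{Q}) \backslash G(\mathbb{Q})$, equals a characteristic function of a ``compact core'' modulo boundary contributions. Once this is in place, the truncated kernels $k^T_o$ and $k^T_\chi$ are obtained by plugging $K_{P,o}$, resp.\ $K_{P,\chi}$, into the same alternating sum, so that the unintegrable behavior near every cusp of $G(\mathbb{Q})\backslash G(\mathbb{A})^1$ is killed termwise.

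Next I would prove the essential technical input: for $T \in \mathfrak{a}_0^+$ suitably regular, the functions $\sum_o |k^T_o(x,f)|$ and $\sum_\chi |k^T_\chi(x,f)|$ are integrable on $G(\mathbb{Q}) \backslash G(\mathbb{A})^1$. Once this is established, I can integrate both expansions of the truncated kernel termwise, and since $\sum_o K_{P,o} = K_P = \sum_\chi K_{P,\chi}$ for every $P$, the geometric and spectral sides agree, yielding equation \ref{etf}. For the second assertion on $\tr R_{\text{cusp}}(f)$, I would use that for $\chi \in \mathcal{X}(G)$ the distribution $J^T_\chi$ captures the discrete spectrum, while for $\chi \in \mathcal{X} \setminus \mathcal{X}(G)$ the $J^T_\chi$ are expressed via Eisenstein series induced from proper parabolics and account for the continuous spectrum; subtracting the latter from the geometric side isolates the cuspidal trace, and noting that residual contributions inside $\mathcal{X}(G)$ are absorbed or handled separately gives the stated formula.

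The main obstacle is the absolute integrability of $k^T_o$ and $k^T_\chi$. On the geometric side this requires sharp control of orbital integrals over Siegel domains and use of the reduction theory of $G(\mathbb{Q})\backslash G(\mathbb{A})^1$, so that the alternating sum genuinely produces a rapidly decreasing function. On the spectral side the difficulty is much deeper: one needs Langlands' theory of Eisenstein series, their meromorphic continuation, and the inner product formula for truncated Eisenstein series, combined with delicate combinatorial identities on chains of parabolics. Overcoming this is what makes the Arthur trace formula substantially harder than the Selberg trace formula of Theorem \ref{ttfS}, and it is where the bulk of \cite{ArtI,ArtII} is spent.
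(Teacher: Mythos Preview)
Your outline is a faithful high-level sketch of Arthur's argument in \cite{ArtI,ArtII}: decompose the kernel both geometrically and spectrally, truncate parabolic by parabolic, prove absolute integrability of the truncated sums on $G(\mathbb{Q})\backslash G(\mathbb{A})^1$, and then integrate termwise. The paper, however, does not attempt a proof at all; it simply quotes the coarse trace formula from \cite{ArtII} Chapter 5 as a black box. So there is no ``paper's own proof'' to compare against, and your proposal goes well beyond what the exposition requires.

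One small correction on the second assertion: the classes $\chi\in\mathcal{X}(G)$ are pairs $(G,\rho)$ with $\rho$ a cuspidal automorphic representation of $G$ itself, so $\sum_{\chi\in\mathcal{X}(G)}J^T_\chi(f)=\tr R_{\text{cusp}}(f)$ directly, and the displayed formula is just the rearrangement of \eqref{etf}. The residual spectrum arises from $\chi$ with $M\neq G$ (via residues of Eisenstein series), so it sits inside $\mathcal{X}\setminus\mathcal{X}(G)$, not inside $\mathcal{X}(G)$ as your last sentence suggests.
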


\subsection{The dominant term on the geometric side}

Consider the adelic case at first. 
Let $F$ be a number field and $V,V_{\infty}$ and $V_f$ be the set of places, Archimedean and non-Archimedean places of $F$ respectively. 
Let $\mathbb{A}$ be adele ring of $F$ and $A_{\text{fin}}\subset\mathbb{A} $ be restricted product over the finite places. 

Suppose $S\subset V$ is a finite set containing $V_{\infty}$. 
Let $F_S=\prod_{v\in S}F_s$ and $\mathbb{A}^S=\prod'_{v\in V\backslash S}F_s$ so that $\mathbb{A}=F_S\times \mathbb{A}^S$. 
We define
\begin{enumerate}
    \item $G(F_S)^1=\underset{\chi\in \Hom(G(F_S),F^{\times})}{\bigcap}\{\ker|\chi|\colon G(F_S)\to \mathbb{R}_{+}\}$,
    \item $G(\mathbb{A})^1=\underset{\chi\in \Hom(G(\mathbb{A}),F^{\times})}{\bigcap}\{\ker|\chi|\colon G(\mathbb{A})\to \mathbb{R}_{+}\}$,
\end{enumerate}
where $|\cdot|$ is the product of valuations on $F_S$ and $\mathbb{A}$ respectively. 
 
We will consider the representation of $G(F_S)$ on $L^2(G(F)\backslash G(\mathbb{A})^1/K)$ for an open compact subgroup $K$ of $G(\mathbb{A}^S)$. 
In particular, it will reduce to the representation of $G(F_{\infty})$ on $L^2(\Gamma_K\backslash F_{\infty})$ if we take $S=\{\infty\}$ and $\Gamma_K=G(F)\cap K$.

Let $J(f)$ be the distribution defined by Equation \ref{etfcpt} or \ref{etf} in Section \ref{stf} for $f\in C^{\infty}_{c}(G(F_S))$, which also depends on $G(F)\backslash G(\mathbb{A})^1$ is compact or not. 
The goal of this subsection is to prove
\begin{center}
    $\lim\limits_{n\to \infty}\cfrac{\vol(G(F)\backslash G(\mathbb{A})^1)f(1)}{J(f\otimes 1_{K_n})}=1$ 
\end{center}
for certain towers of open compact subgroups $\{K_n\}_{n\geq 1}$ of $G(\mathbb{A}^S)$. 



Let us assume $\Gamma$ is a uniform lattice in the semisimple Lie group $G$.  
We add a subscript such as $R_{\Gamma}$ and $J_{\Gamma}$ for the representation of $G$ on $L^2(\Gamma\backslash G)$ and the corresponding trace formulas as an emphasis on the lattice $\Gamma$.  
Since $\Gamma\backslash G$ is compact, $J_{\Gamma}(f)$ is the trace $\tr R_{\Gamma}(f)$ and we obtain
\begin{center}
    $J_{\Gamma}(f)=\tr R_{\Gamma}(f)=\sum_{\pi\in \widehat{G}}J_{\pi,\Gamma}(f)=\sum_{o\in \mathcal{O}}J_{o,\Gamma}(f)$. 
\end{center}
Let $J_{\{1\},\Gamma}(f)=\vol(\Gamma\backslash G)f(1)$, the contribution of the identity to the geometric side of the trace formula. 

We take a tower of uniform lattices $\{\Gamma_n\}_{n\geq 1}$, such that $\Gamma_n\trianglelefteq \Gamma_1$, $[\Gamma_1:\Gamma_n]<\infty$ and $\cap_{n\geq 1}\Gamma_n=\{1\}$. 


\begin{proposition}\label{pcpttr=}
With the assumption of uniform lattice $\{\Gamma_n\}$ above, we have
\begin{center}
    $\lim\limits_{n\to \infty}\cfrac{J_{\{1\},\Gamma_n}(f)}{J_{\Gamma_n}(f)}=1$. 
\end{center}  
\end{proposition}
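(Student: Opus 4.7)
The plan is to apply the geometric side of the Selberg trace formula (Corollary \ref{ctfS}) and show that, for $n$ large enough, every conjugacy class other than the identity contributes zero to $J_{\Gamma_n}(f)$, so the ratio is eventually exactly $1$.

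First I would unfold the geometric sum into $J_{\Gamma_n}(f) = \int_{\Gamma_n\backslash G}\sum_{\gamma\in \Gamma_n} f(x^{-1}\gamma x)\,dx$, which isolates the identity contribution $\vol(\Gamma_n\backslash G)f(1) = J_{\{1\},\Gamma_n}(f)$. Using the normality $\Gamma_n \trianglelefteq \Gamma_1$, I would pick a measurable fundamental domain $F$ for $\Gamma_1$ and coset representatives $\{\sigma_i\}$ for $\Gamma_1/\Gamma_n$, so that $\bigsqcup_i \sigma_i F$ is a fundamental domain for $\Gamma_n$. Because $\sigma_i^{-1}\Gamma_n\sigma_i = \Gamma_n$, the substitution $\gamma' = \sigma_i^{-1}\gamma\sigma_i$ shows that the inner sum at $\sigma_i y$ coincides with the inner sum at $y$, giving
\[
J_{\Gamma_n}(f) - J_{\{1\},\Gamma_n}(f) = [\Gamma_1:\Gamma_n]\int_F \sum_{\gamma\in\Gamma_n\setminus\{1\}} f(y^{-1}\gamma y)\,dy.
\]

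The key step is then to kill this remainder. Set $C = \overline{F}\cdot \spt(f)\cdot \overline{F}^{-1}$, a compact subset of $G$. Any $\gamma\in\Gamma_n$ contributing nontrivially to the integrand above must lie in $C$, hence in the finite set $\Gamma_1\cap C$ (by discreteness of $\Gamma_1$). The hypothesis $\bigcap_n \Gamma_n = \{1\}$ then allows me to choose $N$ large enough that $\Gamma_n \cap C \subseteq \{1\}$ for all $n \geq N$; for such $n$ the remainder vanishes identically and $J_{\Gamma_n}(f) = J_{\{1\},\Gamma_n}(f)$.

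There is no serious obstacle here, only bookkeeping: the argument works provided one tacitly assumes $f(1)\neq 0$ (otherwise both numerator and denominator are eventually zero and the ratio is formally indeterminate, though our calculation still shows the two quantities coincide for large $n$), and it hinges on the normality of $\Gamma_n$ in $\Gamma_1$ in order to pull the index $[\Gamma_1:\Gamma_n]$ out of $J_{\Gamma_n}(f)$. The same line of reasoning yields the stronger statement that the ratio is \emph{eventually equal} to $1$, not merely convergent to it; the genuine analytic difficulties will appear only when one drops the cocompactness assumption in the later adelic setting.
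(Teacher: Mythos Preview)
Your argument is correct and in fact more self-contained than the paper's. The paper invokes results of Corwin \cite{Corw77}: it quotes a decomposition $\tr R_{\Gamma_n}(f) = J_{\{1\},\Gamma_n}(f) + \sum_{\gamma\neq 1} s_n(\gamma)\vol(\Gamma_n\backslash G)\vol(\Gamma_\gamma\backslash G_\gamma)\int_{\Gamma_\gamma\backslash G}f(x^{-1}\gamma x)\,dx$ with bounded coefficients $s_n(\gamma)\to 0$, and then appeals to Corwin's Theorem~2 for the limit $\vol(\Gamma_n\backslash G)^{-1}\tr R_{\Gamma_n}(f)\to f(1)$. Your approach bypasses the external reference by exploiting cocompactness directly: a relatively compact fundamental domain $F$ for $\Gamma_1$, together with the normality $\Gamma_n\trianglelefteq\Gamma_1$, traps every contributing $\gamma$ inside the fixed compact set $C=\overline{F}\cdot\spt(f)\cdot\overline{F}^{-1}$, after which the finiteness of $\Gamma_1\cap C$ and $\bigcap_n\Gamma_n=\{1\}$ finish the job. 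This is essentially the classical DeGeorge--Wallach argument, and it yields the sharper statement that the ratio is \emph{eventually equal} to~$1$, which the paper's soft limiting argument does not immediately give. Your caveat about $f(1)\neq 0$ is appropriate; the paper tacitly makes the same assumption.
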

\begin{proof} 
Following the Equation (2) in \cite{Corw77}, we obtain
\begin{center}
\small
$\tr R_{\Gamma_n}(\phi)=J_{\{1\},\Gamma_n}(\phi)+\sum_{\gamma\neq 1}s_n(\gamma)\vol(\Gamma_j\backslash G)\vol(\Gamma_\gamma\backslash G_{\gamma}) \int_{\Gamma_{\gamma}\backslash G}\phi(x^{-1}\gamma x)dx$, 
\end{center}
where $0\leq s_n(\gamma) \leq \vol(\Gamma\backslash G)^{-1}$. 
As $\cap_{n\geq 1}\Gamma_n=\{1\}$, $\lim_{n\to \infty}s_n(\gamma)$ for all $\gamma\neq 1$. 
By \cite{Corw77} Theorem 2, we have $\vol(\Gamma_n\backslash G)^{-1}\cdot \lim_{n\to \infty}\tr R_{\Gamma_n}(\phi)=\phi(1)$. 
Hence $\lim\limits_{n\to \infty}\frac{J_{\{1\},\Gamma_n}(\phi)}{J_{\Gamma_n}(\phi)}=\lim\limits_{n\to \infty}\frac{J_{\{1\},\Gamma_n}(\phi)}{\tr R_{\Gamma_n}(\phi)}=1$. 
\end{proof}

Now we let $G$ be a reductive group over a number field $F$. 
Let $K=K_{\infty}K_{\text{fin}}$ be a maximal compact subgroup of $G(\mathbb{A})=G(\mathbb{A}_F)$. 
By fixing a faithful $F$-rational representation $\rho\colon G(F)\to \GL(m,F)$ for some $m>0$, we let $\Lambda\subset F^m$ be an $\mathcal{O}_F$-lattice such that the stablilizer of $\widehat{\Lambda}=\mathcal{O}_F\otimes_F \Lambda$ in $G(A_{\text{fin}})$ is $K_{\text{fin}}$. 

For a non-trivial ideal $I$ of $\mathcal{O}_F$, we let
\begin{center}
    $K(I)=\{g\in G(A_{\text{fin}})|\rho(g)v\equiv v \pmod{I\cdot\widehat{\Lambda}},v\in \widehat{\Lambda}\}$
\end{center}
be the {\it principal congruence subgroup} of level $I$.  
We also denote the ideal norm of $I$ by $N(I)=[\mathcal{O}_F\colon I]$. 
Consider a descending tower of ideals $I_1\supsetneq I_2\supsetneq I_3\supsetneq\cdots$ such that each $I_k$ is prime to (the prime ideals in) $S$. 
We obtain the corresponding tower of principal congruence subgroups: 
\begin{center}
     $K_{1}\supsetneq K_{2} \supsetneq K_{3}\supsetneq\cdots$,
\end{center}
where $K_{n}=K(I_n)$. 
By factoring into prime ideals, the family $\{I_n\}_{n\geq 1}$ satisfies either one of the following properties:
\begin{enumerate}
    \item there exists a prime ideal $\mathfrak{p}$ such that each $\mathfrak{p}^k$ is eventually contained in the tower, i.e., for any $k\geq 1$, there is $N_k>0$ such that $\mathfrak{p}^k\subset I_n$ for all $n\geq n_k$, or,
    \item there exists infinitely many prime ideals $\{\mathfrak{p}_k\}_{k\geq 1}$ such that for each $k$, there exist $M_k>0$ such that  $\mathfrak{p}_k\subset I_n$ for all $n\geq M_k$.  
\end{enumerate}
In either of these two cases, we have \begin{lemma}\label{lcap=1}
     $\cap_{n\geq 1}I_n=\{0\}$ and $\cap_{n\geq 1}K_{n}=\{1\}$.
\end{lemma}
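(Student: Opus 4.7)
The plan is to prove the two statements in sequence: first $\bigcap_{n} I_n = \{0\}$ as a purely ideal-theoretic fact in the Dedekind domain $\mathcal{O}_F$, then derive $\bigcap_{n} K_n = \{1\}$ by transporting the ideal-level triviality to the congruence subgroups via the faithful representation $\rho$. The proof of the first statement splits naturally along the dichotomy already supplied in the excerpt.

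For $\bigcap_n I_n = \{0\}$, I would argue one contrapositive: take a nonzero $x \in \mathcal{O}_F$ and produce some $I_n$ with $x \notin I_n$. In Case 1, the hypothesis is that the $\mathfrak{p}$-adic valuation $v_{\mathfrak{p}}(I_n)$ grows without bound (for each $k$, eventually $I_n \subset \mathfrak{p}^k$). Since the principal ideal $(x)$ has a finite prime factorization in the Dedekind domain $\mathcal{O}_F$, the value $v_{\mathfrak{p}}(x)$ is finite, so any $n$ with $v_{\mathfrak{p}}(I_n) > v_{\mathfrak{p}}(x)$ excludes $x$. This is essentially the Krull intersection theorem for the ideals $\mathfrak{p}^k$. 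In Case 2, I would use the same finiteness of $(x)$'s prime factorization from a different angle: since $(x)$ involves only finitely many prime ideals, the infinite family $\{\mathfrak{p}_k\}$ must contain some $\mathfrak{p}_{k_0}$ that does not divide $(x)$; by hypothesis $\mathfrak{p}_{k_0} \mid I_n$ for all $n \geq M_{k_0}$, which forces $x \notin I_n$ for those $n$.

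For $\bigcap_n K_n = \{1\}$, I would take $g \in \bigcap_n K_n$ and translate the defining congruences $\rho(g)v \equiv v \pmod{I_n \widehat{\Lambda}}$ for all $v \in \widehat{\Lambda}$ into the matrix statement $\rho(g) - \id \in I_n \cdot M_m(\widehat{\mathcal{O}_F})$. The key compatibility between the global ideals and their adelic completions is that $F \cap I_n \widehat{\mathcal{O}_F} = I_n$, which allows the entries of $\rho(g) - \id$ (when viewed inside the global part) to be pushed into $\bigcap_n I_n = \{0\}$. Then $\rho(g) = \id$, and faithfulness of $\rho$ forces $g = 1$.

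The main technical point, and the one I would be most careful about, is the transition from the ideal-level vanishing to the group-level triviality. In Case 1 the ideals $I_n$ might only involve a single prime $\mathfrak{p}$, so that at all other finite places $v$ the local condition defining $K_n$ does not tighten; this means one has to interpret the intersection in the right way (for instance, as the intersection inside the image of the global group $G(F)$, where the sharp equality $F \cap I_n \widehat{\mathcal{O}_F} = I_n$ applies entrywise). The earlier parts of the argument—Krull intersection and finiteness of prime factorizations—are essentially routine; the care lies in using the faithfulness of $\rho$ together with this global-adelic compatibility to conclude cleanly.
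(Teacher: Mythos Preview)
The paper states this lemma without proof, immediately after setting up the two-case dichotomy for the tower $\{I_n\}$; your argument for $\bigcap_n I_n = \{0\}$ fills in exactly the details one would expect from that setup and is correct in both cases.

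Your caution about the second assertion is well placed and in fact identifies a genuine imprecision in the paper's statement. As written, $K_n = K(I_n)$ is an open compact subgroup of $G(\mathbb{A}_{\text{fin}})$ (or $G(\mathbb{A}^S)$), and in Case~1---take for concreteness $I_n = \mathfrak{p}^n$ for a single prime $\mathfrak{p}$---the local component of every $K_n$ at any finite place $v \neq \mathfrak{p}$ equals the full $K_v$, so $\bigcap_n K_n$ is far from trivial as an adelic group. Your proposed reading, replacing $K_n$ by the global congruence subgroup $\Gamma_{K_n} = G(F) \cap K_n$ and invoking $F \cap I_n \widehat{\mathcal{O}_F} = I_n$ entrywise together with faithfulness of $\rho$, is the correct fix and your argument for it is sound. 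Since the paper only invokes the first assertion of the lemma (in the proof of the subsequent lemma on unipotent classes), this imprecision in the second assertion is harmless downstream, but you are right to flag it.
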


Recall the equivalence class of unipotent elements in $G(F)$,  which is the element $\gamma=\gamma_s\gamma_u$ with the semisimple component $r_s=1$ (see \cite{ArtUni} p.1240). 
Let 
\begin{center}
 $J_{\text{unip}}^T(f)$, $f\in C^{\infty}_{c}(G(\mathbb{A})^1)$. 
\end{center}
be the contribution of this equivalence class on the geometric side of the trace formula \ref{etf}. 
We will consider the function of the form $f=h_S\otimes 1_{K_{n}}$ with $h_S\in C^{\infty}_{c}(G(F_S)^1)$. 

\begin{lemma}\label{lunipcls}
For $h_S\in C^{\infty}_{c}(G(F_S)^1)$, 
$\lim\limits_{n\to \infty}J(h_S\otimes 1_{K_{n}})=\lim\limits_{n\to \infty}J_{\text{unip}}(h_S\otimes 1_{K_{n}})$. 
\end{lemma}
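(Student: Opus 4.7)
The plan is to apply the coarse geometric expansion of Theorem \ref{tAtf} and decompose
\begin{equation*}
J(h_S \otimes 1_{K_n}) = J_{\text{unip}}^T(h_S \otimes 1_{K_n}) + \sum_{o \neq o_{\text{unip}}} J_o^T(h_S \otimes 1_{K_n}),
\end{equation*}
where $o_{\text{unip}}$ denotes the class of equivalence whose semisimple part is trivial. It then suffices to show that every summand in the second sum tends to zero and that the sum can be taken inside the limit.

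Fix a non-unipotent class $o \in \mathcal{O}$ with representative $\gamma = \gamma_s \gamma_u$, $\gamma_s \neq 1$. The truncated kernel $k_o^T$ is built from integrals in which $f = h_S \otimes 1_{K_n}$ is evaluated on conjugates of $\gamma$ twisted by unipotent elements. Non-vanishing at the finite-adelic component requires some $x_f \in G(\mathbb{A}_{\text{fin}})$ with $x_f^{-1}\gamma x_f \in K_n = K(I_n)$; through the faithful representation $\rho$ this forces the characteristic polynomial $P_{\gamma}(X) = P_{\gamma_s}(X)$ of $\gamma$ to be congruent to $(X-1)^m \pmod{I_n}$. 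Since $\gamma_s \neq 1$, the polynomial $P_{\gamma_s}(X) - (X-1)^m \in \mathcal{O}_F[X]$ has a nonzero coefficient $c$, and Lemma \ref{lcap=1} supplies some $n_o$ with $c \notin I_n$ for all $n \geq n_o$. Hence $J_o^T(h_S \otimes 1_{K_n}) = 0$ for $n \geq n_o$.

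To pass from the pointwise vanishing to the limit of the full sum, I would argue that, with $h_S$ and the truncation parameter $T$ fixed, the support of $k^T(\,\cdot\,, h_S \otimes 1_{K_n})$ lies in a compact subset of $G(F) \backslash G(\mathbb{A})^1$ independent of $n$ (for $K_n \subseteq K_1$), and that only finitely many classes in $\mathcal{O}$ contribute non-trivially to $J(h_S \otimes 1_{K_1})$. This reduces the alternating sum to a finite one and justifies the interchange.

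The main obstacle is the noncompact case, where in contrast to Proposition \ref{pcpttr=} the distribution $J_o^T$ is not merely a volume times an orbital integral: the truncation involves sums over parabolic subgroups, and uniform control of the contributions of all non-unipotent classes requires nontrivial effort. Here I would appeal to the uniform bounds on truncated weighted orbital integrals established by Finis and Lapid in \cite{FLM11a,FLM11b,FLM15,FLM18}, which provide a $K_n$-independent integrable majorant on $G(F) \backslash G(\mathbb{A})^1$ and thus legitimise the interchange of limit and sum via dominated convergence. In the cocompact case, where $\mathbf{G}$ is anisotropic, all nontrivial parabolic truncations are empty and every rational unipotent element is trivial, so $J_{\text{unip}}$ collapses to $J_{\{1\}}$ and the statement reduces to the adelic version of Proposition \ref{pcpttr=}.
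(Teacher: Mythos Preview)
Your argument is essentially the paper's: reduce to finitely many classes $o$ via compactness of the support, and eliminate each non-unipotent $o$ by the characteristic-polynomial congruence combined with Lemma~\ref{lcap=1}. The paper runs exactly this, noting that $\mathrm{supp}(h_S\otimes 1_{K_n})=D_hK_n\subset D_hK_1$ meets only finitely many classes $o\in\mathcal{O}$, so the geometric side is a \emph{finite} sum uniformly in $n$ and no interchange-of-limits issue arises.

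Where you diverge is in your last paragraph: you flag the truncated parabolic terms as an ``obstacle'' and propose to import the Finis--Lapid majorants. This is unnecessary, and the paper does not do it. The point you are missing is that for $P=M_PN_P$, $\gamma\in M_P(F)\cap o$ and $n\in N_P(\mathbb{A})$, the element $\gamma n$ has the same image as $\gamma_s$ under the invariant-theory quotient (concretely, via $\rho$ the matrix $\rho(\gamma n)$ is block-upper-triangular with the diagonal blocks of $\rho(\gamma)$, hence has the same characteristic polynomial). Thus if $K_{P,o}(x,x)\neq 0$ for $f=h_S\otimes 1_{K_n}$, then some $G(\mathbb{A})$-conjugate of $\gamma n$ lies in $D_hK_n$, and the coefficients of $\det(X\cdot 1-\rho(\gamma_s))-(X-1)^m$ again land in $I_n$. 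Your characteristic-polynomial step therefore kills \emph{every} parabolic summand of $k^T_o$ simultaneously once $n\ge n_o$, and since only finitely many $o$'s occur at all (Arthur's standard finiteness, valid for $D_hK_1$ and hence for all $n$), the conclusion follows directly. No dominated-convergence argument or uniform bounds on weighted orbital integrals are required.
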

\begin{proof}
Let $D_h=\text{supp}(h_S)\subset G(F_S)^1$ be the compact support of $h_S$. 
Then $\text{supp}(h_S\otimes 1_{K_{n})})=D_h K_{n}$ is compact and hence it intersects finitely many semisimple-conjugate class $o\in \mathcal{O}$. 

Consider the trace formula and Equation \ref{etf}, only the classes $o$'s (and its $G(\mathbb{A})$-conjugations) which intersect infinitely many $D_h K_{n}$ contributes a non-trivial $J_o(h_S\otimes 1_{K_{n}})$ to the limit $\lim\limits_{n\to \infty}J(h_S\otimes 1_{K_{n}})$. 

Suppose the $G(\mathbb{A})$ conjugacy classes of elements in $o$ intersects $D_h K_{n}$ for infinitely many $n$, i.e.,
$\{g\gamma g^{-1}|g\in G(\mathbb{A}),\gamma\in o \}\cap D_h K_{n}\neq \emptyset$ for infinitely many $n$. 
Take some $\gamma\in o$. 
By fixing a faithful $F$-representation $\rho\colon G(F)\to \GL(m)$, we let $p(x)\in F[x]$ be the characteristic polynomial of $\rho(\gamma)-1$ (a $m$-by-$m$ matrix over $F$). 
Suppose $p(x)=x^m+a_{m-1}x^{m-1}+\cdots+a_0$ with all $a_i\in F$. 
By Lemma \ref{lcap=1}, we know $a_i$ belongs to infinitely many $I_n$, or, equivalently $a_i=0$. 
Hence $p(x)=x^m$ and $\gamma$ is unipotent. 
\end{proof}

The unipotent contribution $J_{\text{unip}}(h_S\otimes 1_{K_{n}})$ can be further reduced to the the one from the identity as follows. 
We let $I_S$ be a product of prime ideals in at the places of $S$ and $K_{S-S_{\infty}}(I_S)$ be the $S-S_{\infty}$ component of the compact group $K(I_S)$. 

We also let $C^{\infty}_{\Omega}(G(F_S)^1)$ be the set of smooth functions with compact support contained in a compact subset $\Omega$ of $G(F_S)^1$. 
For each $k\geq 0$, we let $\mathcal{B}_k$ be the $k$-th component of the universal enveloping algebra $\mathcal{U}(\mathfrak{g}_{\mathbb{C}})$, where $\mathfrak{g}_{\mathbb{C}}$ is the complexified Lie algebra of the Lie group $G(F_{\infty})$. 
We set $\|h\|_k=\sum_{X\in \mathcal{B}_k}\|X\circ h\|_{L^1(G(\mathbb{A})^1}$ for $h\in C^{\infty}_{\Omega}(G(F_S)^1)$. 

The following result is a special case of  Proposition 3.1 in \cite{FLM15}, whose proof is mainly based on Theorem 3.1 and Theorem 4.2 in \cite{ArtUni}.
\begin{proposition}[Finis-Lapid-M\"{uller}]
There exists an integer $k\geq 0$ such that for any compact subset $\Omega$ of $G(F_S)^1$, we have a constant $C_{\Omega}>0$ and
\begin{center}
    $|J_{\text{unip}}(h_S\otimes 1_{K_{n}})-\vol(G(F)\backslash G(\mathbb{A})^1)h_{S}(1)|\leq C_{\Omega}\frac{(1+\log N(I_SI))^{d_0}}{N(I)}\|h_S\|_k$
\end{center}
for any bi-$K_{S-S_{\infty}}(I_S)$-invariant function $h_S\in C^{\infty}_{\Omega}(G(F_S)^1)$. 
\end{proposition}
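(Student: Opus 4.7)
The plan is to decompose the unipotent distribution $J_{\text{unip}}$ according to the $G(F)$-conjugacy classes of unipotent elements, extract the identity class as the main term, and bound the remainder by the quantity on the right-hand side. Throughout, let $\mathcal{O}_{\text{unip}}\subset\mathcal{O}$ denote the subset of classes whose semisimple part is trivial, so that $J_{\text{unip}}(f)=\sum_{o\in\mathcal{O}_{\text{unip}}}J_o^T(f)$ by Arthur's fine geometric expansion. The trivial class $o=\{1\}$ contributes
\[
  J_{\{1\}}(f)=\vol(G(F)\backslash G(\mathbb{A})^1)\,f(1),
\]
so for $f=h_S\otimes 1_{K_n}$, since $1\in K_n$, this term is exactly $\vol(G(F)\backslash G(\mathbb{A})^1)h_S(1)$. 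It therefore suffices to estimate $\sum_{o\ne\{1\}}J_o^T(h_S\otimes 1_{K_n})$.

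For each non-trivial $o\in\mathcal{O}_{\text{unip}}$, the distribution $J_o^T$ is a weighted unipotent orbital integral in the sense of \cite{ArtUni}. Theorem 3.1 of loc.~cit.\ expresses $J_o^T$ as a finite linear combination of invariant products of local weighted orbital integrals paired against $f$. Applied to $f=h_S\otimes 1_{K_n}$, this yields a factorization into an $S$-adic factor depending on $h_S$ — controlled by a finite-order Sobolev-type norm $\|h_S\|_k$ built from the universal enveloping algebra $\mathcal{U}(\mathfrak{g}_{\mathbb{C}})$, with $k$ governed by the order of derivatives needed to bound the Archimedean weighted orbital integrals uniformly on $\Omega$ — and an $\mathbb{A}^S$-factor consisting of unipotent orbital integrals of $1_{K_n}$ weighted by slowly growing functions of the truncation parameter $T$.

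The key quantitative input is Theorem 4.2 of \cite{ArtUni}, which bounds the $\mathbb{A}^S$-component uniformly by a polynomial of degree $d_0$ (a constant depending on the parabolic combinatorics of $G$) in $\log N(I_S I)$. The crucial saving comes from the congruence condition: if $\gamma\in G(F)$ is a non-trivial unipotent element lying in a $G(\mathbb{A})$-conjugate of the support $D_h K_n$, then by the polynomial argument already used in the proof of Lemma \ref{lunipcls}, the entries of $\rho(\gamma)-1$ are forced to lie in $I$ at all places outside $S$, which produces a full gain of $N(I)^{-1}$ in the orbital volume. Combining the $S$-adic bound in terms of $\|h_S\|_k$, the bi-$K_{S-S_\infty}(I_S)$-invariance (which handles the $I_S$ dependence at the finite places of $S$), and the congruence-level estimate at places outside $S$ gives the stated inequality.

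The main obstacle is the simultaneous bookkeeping at three different types of places: the Archimedean places, where derivative bounds must absorb the Sobolev norm $\|h_S\|_k$; the finite places inside $S$, where the bi-invariance hypothesis must be used to extract the $I_S$ factor cleanly; and the finite places outside $S$, where the gain of $N(I)^{-1}$ arises from a careful analysis of how non-trivial unipotent $G(F)$-conjugacy classes meet the principal congruence subgroups $K_n$. Packaging these three estimates within Arthur's weighted fine expansion — and tracking the explicit constants so that the final bound is polynomial in $\log N(I_SI)$ and linear in $N(I)^{-1}$ — is the technical heart of Proposition 3.1 of \cite{FLM15}, which the present statement specializes.
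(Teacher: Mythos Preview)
The paper does not give its own proof of this proposition: it simply records the statement as a special case of Proposition~3.1 in \cite{FLM15} and notes that the proof there rests on Theorem~3.1 and Theorem~4.2 of \cite{ArtUni}. Your sketch is consistent with that attribution --- you invoke exactly those two theorems of Arthur and correctly identify the separation into the identity contribution and the remaining unipotent classes, together with the congruence saving at the places outside $S$ --- so at the level of citing the right ingredients you agree with the paper.

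That said, since the paper treats this as a black box, your write-up is really a heuristic outline of the argument in \cite{FLM15} rather than a self-contained proof, and a couple of the steps are looser than you suggest. First, it is not automatic that the class $o=\{1\}$ contributes exactly $\vol(G(F)\backslash G(\mathbb{A})^1)f(1)$ in Arthur's truncated setting; one has to check that no weight factor appears for the identity, which is part of what Arthur's analysis of $J^T_{\text{unip}}$ establishes. Second, the claim that the congruence condition on $\rho(\gamma)-1$ ``produces a full gain of $N(I)^{-1}$ in the orbital volume'' is the genuinely hard estimate in \cite{FLM15}: it requires uniform control of the local weighted unipotent orbital integrals of $1_{K_n}$, not just the set-theoretic observation from Lemma~\ref{lunipcls}. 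Your outline names the right inputs but does not carry out this step, which is appropriate given that the paper itself defers entirely to \cite{FLM15}.
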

Then, combining Lemma \ref{lunipcls} we can obtain: 
\begin{corollary}\label{cgeolimit}
   For $h_S\in C^{\infty}_{c}(G(F_S)^1)$, we have
\begin{center}
    $\lim\limits_{n\to \infty}\cfrac{\vol(G(F)\backslash G(\mathbb{A})^1)h_S(1)}{J(h_S\otimes 1_{K^S(n)})}=1$. 
\end{center} 
\end{corollary}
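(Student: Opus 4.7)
The plan is to deduce this directly from Lemma \ref{lunipcls} together with the Finis--Lapid--M\"uller estimate stated just above. By Lemma \ref{lunipcls},
\[ \lim_{n\to\infty} J(h_S\otimes 1_{K_n}) = \lim_{n\to\infty} J_{\text{unip}}(h_S\otimes 1_{K_n}), \]
so it suffices to show $J_{\text{unip}}(h_S\otimes 1_{K_n}) \to \vol(G(F)\backslash G(\mathbb{A})^1)\,h_S(1)$. Fix any compact $\Omega\subset G(F_S)^1$ containing $\spt(h_S)$ and choose $I_S$ adapted to the bi-invariance of $h_S$; since $h_S$ is fixed, $I_S$, $\Omega$, and $\|h_S\|_k$ do not depend on $n$. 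The Finis--Lapid--M\"uller bound then gives
\[ |J_{\text{unip}}(h_S\otimes 1_{K_n}) - \vol(G(F)\backslash G(\mathbb{A})^1)\,h_S(1)| \;\leq\; C_{\Omega}\,\frac{(1+\log N(I_S I_n))^{d_0}}{N(I_n)}\,\|h_S\|_k, \]
whose dependence on $n$ enters only through $N(I_n)$.

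Next I would verify that $N(I_n)\to\infty$, which is where the structure of the tower $\{I_n\}$ imposed above Lemma \ref{lcap=1} is used. In alternative (1), for every $k$ there exists $n_k$ such that $\mathfrak{p}^k\subset I_n$ for all $n\geq n_k$, so $v_{\mathfrak{p}}(I_n)\geq k$ and hence $N(I_n)\geq N(\mathfrak{p})^k$ eventually; letting $k\to\infty$ shows $N(I_n)\to\infty$. In alternative (2), for each $k$ the $k$ distinct primes $\mathfrak{p}_1,\dots,\mathfrak{p}_k$ all divide $I_n$ for $n$ sufficiently large, so $N(I_n)\geq \prod_{i=1}^{k} N(\mathfrak{p}_i)\to\infty$ as $k\to\infty$. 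In either case, the error term tends to $0$ since $(1+\log x)^{d_0}/x\to 0$.

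Combining the last two paragraphs yields $J_{\text{unip}}(h_S\otimes 1_{K_n})\to \vol(G(F)\backslash G(\mathbb{A})^1)\,h_S(1)$, hence the same limit for $J(h_S\otimes 1_{K_n})$ by Lemma \ref{lunipcls}. For test functions normalized so that $h_S(1)\neq 0$ (the ratio being trivial or a $0/0$ convention otherwise), dividing gives the desired limit $1$. There is no genuine obstacle here: the work is entirely done by Lemma \ref{lunipcls} and the Finis--Lapid--M\"uller bound, and the only piece to check is the elementary claim $N(I_n)\to\infty$, which follows from either of the two tower assumptions listed before Lemma \ref{lcap=1}.
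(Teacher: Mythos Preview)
Your proposal is correct and follows exactly the approach the paper intends: the paper's own ``proof'' is the single clause ``Then, combining Lemma \ref{lunipcls} we can obtain,'' i.e.\ precisely the combination of Lemma \ref{lunipcls} with the Finis--Lapid--M\"uller estimate that you spell out. You have simply filled in the details the paper omits, in particular the verification that $N(I_n)\to\infty$ from the tower dichotomy and the choice of $I_S$ making $h_S$ bi-$K_{S-S_\infty}(I_S)$-invariant.
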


\section{The multiplicities problem}\label{smult}

This section is devoted to the multiplicity of bounded subsets of the unitary dual, instead of a single irreducible representation. 
\subsection{The multiplicities in $L^2(\Gamma\backslash G)$}\label{ssmult}

Let $G=\mathbf{G}(\mathbb{R})^0$, the connected component of the real group obtained from an almost simple group $\mathbf{G}$ over $\mathbb{Q}$. 
By fixing a faithful $\mathbb{Q}$-embedding $\rho:\mathbf{G}\to GL_n$, we have an arithmetic group $\Gamma$ commensurable with $G\cap \GL_n(\mathbb{Z})$. 
Let $\widehat{G}$ be the unitary dual of $G$ and $\widehat{G}_{\text{temp}}\subset \widehat{G}$ be the tempered dual.
Let us consider the following two cases. 


1. {\it $\Gamma\backslash G$ is compact}. As introduced in Section \ref{sstfS}, $L^2(\Gamma\backslash G)$ can be decomposed into a direct sum of irreducible representations of $G$ with each of finite multiplicity, i.e., 
\begin{center}
    $L^2(\Gamma\backslash G)=\oplus m_{\Gamma}(\pi)\cdot \pi$
\end{center}
with $m_{\Gamma}(\pi)\colon =\dim \Hom_G(\pi,L^2(\Gamma\backslash G))<\infty$ for each $\pi \in \widehat{G}$. 

2. {\it $\Gamma\backslash G$ is not compact.} If $G$ is semisimple, we have $\Gamma\backslash G$ is of finite (Haar) measure (see \cite{PetVlt14} Theorem 4.13). 
The regular representation has both discrete and continuous spectra: 
$L^2(\Gamma\backslash G)=L^2_{\text{disc}}(\Gamma\backslash G)\oplus L^2_{\text{disc}}(\Gamma\backslash G)$. 
The discrete spectrum can be written as the direct sum of cuspidal and residue subspaces: $L^2_{\text{disc}}(\Gamma\backslash G)=L^2_{\text{cusp}}(\Gamma\backslash G)\oplus L^2_{\text{res}}(\Gamma\backslash G)$. 
which can be decomposed further into a direct sum of irreducible representations with finite multiplicities, i.e., 
\begin{center}
    $L^2_{\text{disc}}(\Gamma\backslash G)=\oplus m_{\Gamma}(\pi)\cdot \pi$
\end{center}
with $m_{\Gamma}(\pi)\colon=\dim \Hom_G(\pi,L^2_{\text{disc}}(\Gamma\backslash G))=\dim \Hom_G(\pi,L^2(\Gamma\backslash G))$ is finite for each $\pi \in \widehat{G}$. 

We say $X\subset \widehat{G}$ is {\it bounded} if it is relatively compact under the Fell topology. 

\begin{definition}[The multiplicity for $X\subset \widehat{G}$]
For a bounded $X \subset \widehat{G}$, we define the {\it multiplicity of $X$} to be the sum of the multiplicities of the irreducible representations in $X$, i.e., 
\begin{center}
    $m_{\Gamma}(X)\colon=\sum_{\pi \in X}m_{\Gamma}(\pi)$. 
\end{center}
\end{definition}

Borel and Garland proved the finiteness of $m_{\Gamma}(X)$ by considering the spectrum of a certain Laplacian (see \cite{BoGa83} Theorem 3, Theorem 4.6 and also \cite{Ji98} Theorem 1.1.3).
\begin{theorem}[Borel-Garland]\label{tBGfin}
Let $G=\mathbf{G}(\mathbb{R})^0$ for a connected semisimple group $\mathbf{G}$ over $\mathbb{Q}$ and $X\subset \widehat{G}$ is bounded. 
We have $m_{\Gamma}(X)<\infty$. 
\end{theorem}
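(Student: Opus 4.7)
The plan is to extract finiteness from the spectral theory of an invariant elliptic operator on $L^2_{\text{disc}}(\Gamma\backslash G)$, as in Borel--Garland. Fix an $\mathrm{Ad}(K)$-invariant inner product on $\mathfrak{g}$ that is positive definite on both $\mathfrak{p}$ and $\mathfrak{k}$ (using the Killing form on $\mathfrak{p}$ and its negative on $\mathfrak{k}$), and let $\Omega=2C_K-C\in U(\mathfrak{g})$, where $C$ and $C_K$ are the Casimirs of $\mathfrak{g}$ and $\mathfrak{k}$ relative to the Killing form. Then $R(\Omega)$ is a nonnegative, essentially self-adjoint elliptic element of the right regular representation, and on the $\tau$-isotypic subspace of any $\pi\in\widehat{G}$, $\pi(\Omega)$ acts by the scalar $\lambda(\pi,\tau)=2c_\tau-\chi_\pi(C)\ge 0$, where $c_\tau$ and $\chi_\pi(C)$ denote the Casimir eigenvalues of $\tau$ and $\pi$.

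The key preliminary is to show that Fell-compactness of $X$ yields two uniform bounds: (i) a constant $M>0$ with $|\chi_\pi(C)|\le M$ for every $\pi\in X$, and (ii) a finite set $T=\{\tau_1,\dots,\tau_N\}\subset\widehat{K}$ such that the restriction $\pi|_K$ contains at least one $\tau_i$ for every $\pi\in X$. Item (i) is immediate from the continuity of $\pi\mapsto\chi_\pi(z)$ in the Fell topology for each $z\in Z(\mathfrak{g})$. Item (ii) is the main obstacle of the argument; it rests on Harish-Chandra's parametrization of $\widehat{G}$ (or Vogan's theory of minimal $K$-types), which shows that a Fell-compact subset is covered by finitely many continuous families of parabolically induced representations, along each of which the set of minimal $K$-types is uniformly finite.

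Granting (i) and (ii), set $A=\max_i c_{\tau_i}$ and $\mathcal{H}_T=\bigoplus_{i=1}^N L^2_{\text{disc}}(\Gamma\backslash G)(\tau_i)$. Each summand is canonically identified with $L^2$-sections of the homogeneous hermitian vector bundle $\Gamma\backslash(G\times_K V_{\tau_i})\to\Gamma\backslash G/K$, and $R(\Omega)$ restricts there to a formally self-adjoint elliptic operator. In the cocompact case the standard theory on a closed manifold gives this operator a discrete spectrum with finite-dimensional eigenspaces; in the finite-covolume case the same conclusion holds on the discrete subspace, using rapid decay of cusp forms together with the finiteness of the residual spectrum in each infinitesimal character. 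Consequently the spectral projection $P$ of $R(\Omega)|_{\mathcal{H}_T}$ to the interval $[0,2A+M]$ has finite rank. For every $\pi\in X$ and any $i$ with $\tau_i\subset\pi|_K$, the $m_\Gamma(\pi)$ copies of $\pi$ in $L^2_{\text{disc}}(\Gamma\backslash G)$ contribute $m_\Gamma(\pi)\cdot\dim\Hom_K(\tau_i,\pi)\ge m_\Gamma(\pi)$ linearly independent vectors to the $\lambda(\pi,\tau_i)$-eigenspace of $R(\Omega)|_{\mathcal{H}_T}$, and this eigenspace lies in the range of $P$ because $\lambda(\pi,\tau_i)\in[0,2A+M]$. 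Since distinct $\pi\in X$ yield mutually orthogonal isotypic subspaces, summing over $\pi\in X$ gives $m_\Gamma(X)\le\rk(P)<\infty$.
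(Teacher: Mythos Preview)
The paper gives no proof beyond citing Borel--Garland's Laplacian argument, and your proposal is precisely a sketch of that argument, so the approaches coincide.

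Two caveats on your sketch. First, the claim that $\pi\mapsto\chi_\pi(C)$ is \emph{continuous} in the Fell topology is not immediate; what you actually need is only boundedness on a relatively compact set, and that is most cleanly obtained from the same Langlands/Vogan parametrization you already invoke for (ii), rather than from an abstract continuity statement. Second, and more substantively, in the finite-covolume case your assertion that $R(\Omega)$ restricted to $L^2_{\text{disc}}(\Gamma\backslash G)(\tau)$ has discrete spectrum with finite-dimensional eigenspaces is not a standard elliptic-regularity input: it is exactly the analytic heart of the Borel--Garland theorem. Your phrase ``rapid decay of cusp forms together with finiteness of the residual spectrum'' points to the right ingredients (Langlands' description of the residual spectrum and the estimates that make cusp forms behave like the compact case), but as written you are citing the theorem to prove the theorem. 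This is fine as a roadmap matching the paper's own level of detail, but you should be explicit that this step \emph{is} the Borel--Garland result rather than a preliminary to it.
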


For a subset $X\subset \widehat{G(F_S)^1}$, we call it {\it bounded} if it is relatively compact under the Fell topology (see \cite{Svg97}).

\begin{definition}[The multiplicity for $\widehat{G(F_S)^1}$]
Suppose $K$ is a compact open subgroup of $ G(\mathbb{A}^S)$. 
Let $\sigma$ be an irreducible representation of $G(F_S)^1$ and $X\subset \widehat{G(F_S)^1}$ be a bounded subset. 

\begin{enumerate}
    \item The {\it multiplicity of $\sigma$ with respect to $K$} is defined as 
    \begin{center}
        $m_{K}(\sigma)\colon=\dim \Hom_{G(F_S)^1}(\sigma,L^2(G(\mathbb{Q})\backslash G(\mathbb{A})^1/K))$.
    \end{center}
    \item The {\it multiplicity of $X$ with respect to $K$} is defined as 
\begin{center}
    $m_{K}(X)\colon=\sum_{\sigma\in X}m_{K}(\sigma)$. 
\end{center}
\end{enumerate}
\end{definition}

For an irreducible representation $\pi$ of $G(\mathbb{A})^1$, we write $\pi=\pi_S\otimes \pi^S$, where $\pi_S$ and $\pi^S$ denote the components of the representations of $G(F_S)^1$ and $G(\mathbb{A}^S)$ respectively. 
As shown in Theorem \ref{tBGfin}, $m_K(X)$ is finite and hence well-defined. 
If we treat $L^2(G(\mathbb{Q})\backslash G(\mathbb{A})^1/K)$ as the subspace of $K$-right invariant functions in $L^2(G(\mathbb{Q})\backslash G(\mathbb{A})^1))$, we have
\begin{center}
   $m_{K}(\sigma)=\sum_{\pi\in \widehat{G(\mathbb{A})^1},\pi_S=\sigma}\dim \Hom_{G(\mathbb{A})^1}(\pi,L^2(G(\mathbb{Q})\backslash G(\mathbb{A})^1))\dim(\pi^S)^K$. 
\end{center}
If we take $S=V_{\infty}$ and $\mathbf{G}$ is semisimple, simply connected, and without any $F$-simple factors $H$ such that $H(F_{\infty})$ is compact 
and $K$ is an open compact subgroup of $G(\mathbb{A}_{\text{fin}})$, 
we know $\Gamma_K=G(\mathbb{F})\cap K$ is a lattice in the seimisimple Lie group $G(F_{\infty})$. 

\begin{lemma}\label{l2m}
With the assumption above, we have $m_{\Gamma_K}(\pi)=m_K(\pi)$ for any $\pi \in \widehat{G(F_{\infty})^1}$ and  $m_{\Gamma_K}(X)=m_K(X)$ for any bounded $X \subset \widehat{G(F_{\infty})^1}$
\end{lemma}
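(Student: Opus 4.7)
The plan is to reduce the equality of multiplicities to a single $G(F_\infty)$-equivariant identification of $L^2$-spaces,
$$L^2(\Gamma_K\backslash G(F_\infty)) \;\cong\; L^2\bigl(G(F)\backslash G(\mathbb{A})^1/K\bigr),$$
after which both statements are immediate from the definition of the multiplicity as the dimension of a Hom-space (and linearity in $\pi \in X$).

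The main tool is the strong approximation theorem. Under the standing hypotheses, $\mathbf{G}$ is semisimple and simply connected with no $F$-simple factor whose group of real points is compact, so by Platonov--Rapinchuk, $G(F)$ is dense in $G(\mathbb{A}^{V_\infty}) = G(\mathbb{A}_{\mathrm{fin}})$. Using the product decomposition $G(\mathbb{A}) = G(F_\infty)\times G(\mathbb{A}_{\mathrm{fin}})$ and the fact that $K\subset G(\mathbb{A}_{\mathrm{fin}})$, density of $G(F)$ implies that every double coset in $G(F)\backslash G(\mathbb{A})/K$ has a representative of the form $(g_\infty,1)$, and two such representatives $(g_\infty,1)$, $(g'_\infty,1)$ are equivalent iff $g'_\infty = \gamma g_\infty$ for some $\gamma\in G(F)\cap K = \Gamma_K$. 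This produces the $G(F_\infty)$-equivariant homeomorphism
$$\Gamma_K\backslash G(F_\infty) \;\xrightarrow{\sim}\; G(F)\backslash G(\mathbb{A})/K,\qquad \Gamma_K g_\infty \;\longmapsto\; G(F)\cdot (g_\infty,1)\cdot K.$$
Because $\mathbf{G}$ is semisimple there are no nontrivial $\mathbb{Q}$-rational characters, so $G(\mathbb{A})^1=G(\mathbb{A})$ and $G(F_\infty)^1=G(F_\infty)$, and one can safely pass between the two notations. It also follows (together with Borel--Harish-Chandra applied to any congruence subgroup contained in $\Gamma_K$) that $\Gamma_K$ is indeed a lattice in $G(F_\infty)$, as asserted in the lemma.

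With a compatible normalization of Haar measures on $G(F_\infty)$ and on $G(\mathbb{A})^1/K$ (using the counting measure on the discrete quotient $G(\mathbb{A}_{\mathrm{fin}})/K$), the above homeomorphism becomes a measure-preserving bijection, and pullback of functions yields a unitary $G(F_\infty)$-equivariant isomorphism
$$\Phi\colon L^2\bigl(G(F)\backslash G(\mathbb{A})^1/K\bigr)\;\xrightarrow{\;\sim\;}\; L^2(\Gamma_K\backslash G(F_\infty)).$$
Applying $\Hom_{G(F_\infty)}(\pi,-)$ to $\Phi$ gives $m_K(\pi)=m_{\Gamma_K}(\pi)$ for every $\pi\in\widehat{G(F_\infty)}$, and summing over $\pi\in X$ gives $m_K(X)=m_{\Gamma_K}(X)$ for any bounded $X$.

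I expect the only nontrivial step to be the correct invocation of strong approximation: one must verify that the hypotheses of the lemma (semisimple, simply connected, no $F$-simple compact-at-infinity factor) match those of Kneser--Platonov, and that the resulting density is strong enough to cover \emph{every} double coset in $G(F)\backslash G(\mathbb{A})/K$ by a representative in $G(F_\infty)$. Once this is in place, the rest is essentially a bookkeeping exercise in translating the adelic definition of $m_K$ into the classical definition of $m_{\Gamma_K}$; no trace-formula input or analytic estimate is needed.
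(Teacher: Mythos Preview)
Your proposal is correct and follows essentially the same approach as the paper: the paper's proof is a one-line appeal to the identification $\Gamma_K\backslash G(F_\infty)\cong G(F)\backslash G(\mathbb{A})/K$ (citing Knapp and Platonov--Rapinchuk), which yields the $G(F_\infty)$-equivariant isomorphism of $L^2$-spaces and hence the equality of multiplicities. You have simply unpacked that citation by spelling out the role of strong approximation and the double-coset computation.
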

\begin{proof}
It follows the fact $G(\mathbb{Q})\backslash G(\mathbb{A})/K$ can be identified with $\Gamma_K\backslash G(F_{\infty})$, which leads to a $G(F_{\infty})$-isomorphism $L^2(\Gamma_K\backslash G(F_{\infty}))\cong L^2(G(\mathbb{Q})\backslash G(\mathbb{A})^1/K)$ (see \cite{Kn97llds} Chapter 6 and \cite{PR94} Chapter 7.4.). 
\end{proof}

For a finite set $S$ and a function $\phi$ on $\widehat{G(F_S)^1}$, we define
\begin{center}
    $m_K(\phi)\colon =\int_{\widehat{G(F_S)^1}}\phi(\pi)dm_K(\pi)$
\end{center}
as its integral with respect to the measure given by multiplicities above. 
If $1_X$ is the characteristic function of $X$, i.e., $1_X(\pi)=1$ if $\pi \in X$ and $0$ otherwise, $m_K(1_X)=m_K(X)$. 

For $f\in C^{\infty}_{\text{c}}(G(F_S)^1)$, we let $\widehat{f}(\pi)=\tr \pi(f)$, the distribution character of $\pi$. 
Let $R_{\text{disc}}$ denote the action of $G(\mathbb{A})$ on the discrete subspace $L^2(G(\mathbb{Q})\backslash G(\mathbb{A})^1)$.

\begin{proposition}\label{ptrmk}
For $f\in C^{\infty}_{\text{c}}(G(F_S)^1)$, we have
\begin{center}
$\tr R_{\text{disc}}(f\otimes \frac{1_K}{\vol(K)})=m_K(\hat{f})$. 
\end{center}
\end{proposition}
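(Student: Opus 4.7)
The plan is to compute both sides by decomposing $L^2_{\text{disc}}(G(\mathbb{Q})\backslash G(\mathbb{A})^1)$ according to the unitary dual of $G(\mathbb{A})^1$, using factorizability of irreducible automorphic representations, and then recognizing convolution by $\frac{1_K}{\vol(K)}$ as the orthogonal projection onto $K$-fixed vectors.

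First I would write
\begin{equation*}
L^2_{\text{disc}}(G(\mathbb{Q})\backslash G(\mathbb{A})^1)=\bigoplus_{\pi\in\widehat{G(\mathbb{A})^1}} m(\pi)\cdot\pi,
\qquad m(\pi)=\dim\Hom_{G(\mathbb{A})^1}(\pi,L^2_{\text{disc}}).
\end{equation*}
Since each irreducible $\pi\in\widehat{G(\mathbb{A})^1}$ factorizes as $\pi=\pi_S\otimes\pi^S$ with $\pi_S\in\widehat{G(F_S)^1}$ and $\pi^S\in\widehat{G(\mathbb{A}^S)}$, the operator $R_{\text{disc}}(f\otimes\tfrac{1_K}{\vol(K)})$ acts on the $\pi$-isotypic component as $m(\pi)$ copies of $\pi_S(f)\otimes \pi^S(\tfrac{1_K}{\vol(K)})$. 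Here $\pi_S(f)$ is trace class with trace $\hat f(\pi_S)$, while the normalized idempotent property of $\tfrac{1_K}{\vol(K)}$ makes $\pi^S(\tfrac{1_K}{\vol(K)})$ equal to the orthogonal projection onto $(\pi^S)^K$, which is finite-dimensional by admissibility (this is the one step that I would verify carefully, invoking the admissibility of $\pi^S$ to ensure trace class and to justify that only finitely many $\pi$ with $(\pi^S)^K\neq 0$ contribute in any bounded region).

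Taking traces then gives
\begin{equation*}
\tr R_{\text{disc}}\!\left(f\otimes\tfrac{1_K}{\vol(K)}\right)=\sum_{\pi\in\widehat{G(\mathbb{A})^1}} m(\pi)\,\hat f(\pi_S)\,\dim(\pi^S)^K.
\end{equation*}
Grouping the sum by the Archimedean-and-$S$ component $\sigma=\pi_S$ and using the formula displayed just before Lemma~\ref{l2m},
\begin{equation*}
m_K(\sigma)=\sum_{\pi:\pi_S=\sigma} m(\pi)\dim(\pi^S)^K,
\end{equation*}
this becomes $\sum_{\sigma\in\widehat{G(F_S)^1}} m_K(\sigma)\,\hat f(\sigma)$, which by definition of integration against the discrete measure $dm_K$ equals $m_K(\hat f)$.

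The main obstacle will be the trace-class and convergence bookkeeping: justifying term-by-term tracing requires knowing that the operator is trace class and that the $K$-fixed spaces of infinite-component pieces sum finitely against any fixed $f$. Both follow from the Borel--Garland finiteness in Theorem~\ref{tBGfin} together with the standard fact that $R_{\text{disc}}(h)$ is of trace class for $h\in C_c^\infty(G(\mathbb{A})^1)$; once these are cited, the identity drops out from the computation above.
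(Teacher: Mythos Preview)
Your argument is correct and follows essentially the same route as the paper: decompose $R_{\text{disc}}$ over $\widehat{G(\mathbb{A})^1}$, factor $\pi=\pi_S\otimes\pi^S$, identify $\tr\pi^S(\tfrac{1_K}{\vol(K)})=\dim(\pi^S)^K$, and regroup by $\sigma=\pi_S$ using the displayed formula for $m_K(\sigma)$. The only cosmetic difference is that the paper computes $\tr\pi^S(1_K)=\vol(K)\dim(\pi^S)^K$ directly via orthogonality of $K$-characters rather than phrasing it as ``projection onto $K$-invariants,'' and the paper does not spell out the trace-class/convergence bookkeeping that you (rightly) flag and handle via Theorem~\ref{tBGfin}.
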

\begin{proof}
Observe for the component $\pi^S$ of representation of $G(\mathbb{A}^S)$, we have
\begin{equation*}
\begin{aligned}
    \tr \pi^S(1_K)&=\int_{G(\mathbb{A}^S)}1_K(x)\pi^S(x^{-1})d\mu^S(x)\\
    &=\int_{K}\pi^S(x^{-1})d\mu^S(x)=\vol(K)\dim(\pi^S)^K,
\end{aligned}
\end{equation*}
where we apply the fact that $\int_K \sigma(x)d\mu^S(x)=0$ for any non-trivial irreducible representation $\sigma$ of $K$. 
Hence we obtain
\begin{equation*}
\begin{aligned}
  \tr R_{\text{disc}}(f\otimes \frac{1_K}{\vol(K)})&=\frac{1}{\vol(K)}\sum_{\pi\in \widehat{G(\mathbb{A})^1}}m(\pi)\tr \pi(f\otimes 1_K)\\
  &=\frac{1}{\vol(K)}\sum_{\pi\in \widehat{G(\mathbb{A})^1}}m(\pi)\tr \pi_S(f)\tr \pi^S(1_K)\\
  &=\frac{1}{\vol(K)}\sum_{\pi\in \widehat{G(\mathbb{A})^1}}m(\pi)\tr \pi_S(f)\vol(K)\dim(\pi^S)^K\\
  &=\sum_{\sigma\in \widehat{G(F_S)^1}}m_K(\sigma)\tr \sigma(f)=m_K(\widehat{f}). 
\end{aligned}
\end{equation*}
\end{proof}

We also give the following result which connects the trace formulas for adelic groups and Lie groups.
\begin{corollary}\label{c2trf}
 Let $\Gamma_K=G(F)\cap K$ with an open compact subgroup $K$ of $G(\mathbb{A}_{\rm{fin}})$. 
We have
\begin{center}
    $\tr R_{\text{disc}}(f\otimes \frac{1_K}{\vol(K)})=\tr R_{\Gamma_K}(f)$. 
\end{center}
for all $f\in C^{\infty}_{c}(G(F_{\infty})^1)$.    
\end{corollary}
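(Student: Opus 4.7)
The plan is to chain Proposition \ref{ptrmk} with Lemma \ref{l2m} and the spectral decomposition of $L^2_{\text{disc}}(\Gamma_K \backslash G(F_\infty))$. All three ingredients are in place, so the proof should be a short two-step identification of sums.

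First I would specialize to $S = V_\infty$, so that $G(F_S)^1 = G(F_\infty)^1$ and our test function $f \in C^\infty_c(G(F_\infty)^1)$ fits into the setting of Proposition \ref{ptrmk}. Applying that proposition gives
\begin{center}
$\tr R_{\text{disc}}(f \otimes \tfrac{1_K}{\vol(K)}) = m_K(\widehat{f}) = \sum_{\sigma \in \widehat{G(F_\infty)^1}} m_K(\sigma)\,\tr \sigma(f).$
\end{center}
Next, since $K$ is open compact in $G(\mathbb{A}_{\text{fin}})$ and $\Gamma_K = G(F) \cap K$ is a lattice in the Lie group $G(F_\infty)$, Lemma \ref{l2m} provides the equality $m_K(\sigma) = m_{\Gamma_K}(\sigma)$ for every $\sigma \in \widehat{G(F_\infty)^1}$. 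Substituting this identification converts the right-hand side into $\sum_\sigma m_{\Gamma_K}(\sigma)\, \tr\sigma(f)$.

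Finally, I would recognize this sum as the classical discrete trace. Decomposing the discrete spectrum as $L^2_{\text{disc}}(\Gamma_K \backslash G(F_\infty)) = \bigoplus_\sigma m_{\Gamma_K}(\sigma)\cdot \sigma$ (which is a direct sum with finite multiplicities by the Borel--Garland theorem, Theorem \ref{tBGfin}), we get $\tr R_{\Gamma_K}(f) = \sum_\sigma m_{\Gamma_K}(\sigma)\, \theta_\sigma(f) = \sum_\sigma m_{\Gamma_K}(\sigma)\, \tr \sigma(f)$, matching the previous expression. Combining the two equalities gives the corollary.

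The only point requiring mild care is the trace-class assertion: in the cocompact case this is already Corollary \ref{ctfS}, while in the non-cocompact case one interprets $\tr R_{\Gamma_K}(f)$ as the trace of $R_{\Gamma_K}(f)$ restricted to $L^2_{\text{disc}}(\Gamma_K \backslash G(F_\infty))$, which is of trace class because $f$ is smooth with compact support and each $m_{\Gamma_K}(\sigma)$ is finite. I do not expect any serious obstacle here, since the heavy lifting (the multiplicity identification, the finiteness of multiplicities, and the adelic-classical dictionary $L^2(\Gamma_K \backslash G(F_\infty)) \cong L^2(G(F) \backslash G(\mathbb{A})^1/K)$) is already supplied by Lemma \ref{l2m} and Theorem \ref{tBGfin}.
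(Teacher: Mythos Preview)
Your proposal is correct and follows essentially the same route as the paper's own proof: apply Proposition~\ref{ptrmk} to rewrite the adelic trace as $m_K(\widehat{f})$, invoke Lemma~\ref{l2m} to replace $m_K$ by $m_{\Gamma_K}$, and then identify $m_{\Gamma_K}(\widehat{f})$ with $\tr R_{\Gamma_K}(f)$ via the discrete spectral decomposition. The paper compresses this into a single sentence, whereas you spell out the intermediate sums and the trace-class caveat, but the argument is the same.
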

\begin{proof}
It follows the fact $m_K(\widehat{f})=m_{\Gamma_K}(\widehat{f})$ in Lemma \ref{l2m},  $m_{\Gamma_K}(\widehat{f})=\tr R_{\Gamma_K}(f)$ and Proposition \ref{ptrmk}.  
\end{proof}

\subsection{Sauvageot's density theorems}\label{ssSvg}

We have a brief review of the results in \cite{Svg97}. 
See also \cite{Shin12} for an alternative approach and corrections. 

For an open compact subgroup $K$ of $G(\mathbb{A}^S)$, we define a measure on $\widehat{G(F_S)^1}$ by
\begin{center}
$\nu_K(X)\colon=\frac{\vol(K)}{\vol(G(\mathbb{Q})\backslash G(\mathbb{A})^1)}m_K(X)$
\end{center}
for any bounded subset $X$ of $\widehat{G(F_S)^1}$ and $m_K$ is the multipilicity defined in Chapter \ref{ssmult}.

Let $K_1\supsetneq K_2 \supsetneq \cdots $ be a sequence of open compact subgroups of $G(\mathbb{A}^S)$. 
Given a bounded subset $X$ of $\widehat{G(F_S)^1}$ and $C\geq 0$, we write 
\begin{center}
$\lim\limits_{n\to \infty}\nu_K(X)=C$, 
\end{center}
if for any $\varepsilon>0$, there exists  $N=N(\varepsilon)>0$ such that $|\nu_{K_n}(X)-C|< \varepsilon$ for all $n\geq N$. 

Let $\mathcal{H}(G(F_S)^1)$ be the complex algebra of smooth, compactly-supported, bi-$K_S$-finite functions on $G(F_S)^1$. 

\begin{lemma}[\cite{Svg97} Corollaire 6.2]\label{ls1}
For $\varepsilon>0$ and any bounded $X\subset \widehat{G(F_S)^1}\backslash \widehat{G(F_S)^1}_{\text{temp}}$, there is $\Psi\in \mathcal{H}(G(F_S)^1)$ such that
\begin{center}
$\widehat{\Psi}|_{\widehat{G(F_S)^1}}\geq 0$, $\nu(\widehat{\Psi})<\varepsilon$ and $\widehat{\Psi}|_{X}\geq 1$.
\end{center}
\end{lemma}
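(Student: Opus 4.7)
The plan rests on two complementary facts. First, writing $\Psi = f \ast f^{\ast}$ automatically makes $\widehat{\Psi}(\pi) = \tr(\pi(f)\pi(f)^{\ast}) = \|\pi(f)\|_{\mathrm{HS}}^2 \geq 0$ on all of $\widehat{G(F_S)^1}$. Second, the Plancherel formula gives $\nu(\widehat{\Psi}) = \Psi(1) = \|f\|_{L^2}^2$, where $\nu$ is supported entirely on $\widehat{G(F_S)^1}_{\mathrm{temp}}$. Since non-tempered representations carry zero Plancherel mass, the Hilbert--Schmidt norm $\|\pi(f)\|_{\mathrm{HS}}$ for non-tempered $\pi$ is fully decoupled from $\|f\|_{L^2}$, and this decoupling is exactly what makes the lemma plausible.

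The first step is to reduce to finitely many local pieces. By the Langlands classification, every $\pi \in X$ is a Langlands quotient $J(P,\sigma,\lambda)$ with $\sigma$ tempered on the Levi $M_P$ and $\Re\lambda$ in the open positive chamber of $\mathfrak{a}_P^{\ast}$. Boundedness of $X$ together with its disjointness from $\widehat{G(F_S)^1}_{\mathrm{temp}}$ confines these Langlands data to a compact region on which $\delta \leq \|\Re\lambda\| \leq R$ for some fixed $\delta > 0$ and $R < \infty$. I would then cover $X$ by finitely many small neighborhoods $X_1,\dots,X_N$, each centered at a fixed parameter $(P_i,\sigma_i,\lambda_i)$.

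For each cell I would construct a bi-$K_S$-finite $f_i \in \mathcal{H}(G(F_S)^1)$ whose character $\pi \mapsto \|\pi(f_i)\|_{\mathrm{HS}}^2$ is at least $1$ on $X_i$ while $\|f_i\|_{L^2}^2 < \varepsilon/N$. At the Archimedean places this is realized via Harish-Chandra's Paley--Wiener theorem, producing wave packets concentrated near $(P_i,\sigma_i,\lambda_i)$; at the non-Archimedean places one uses the Bernstein center together with pseudo-coefficients of standard modules $I(\sigma_i,\lambda_i)$. In both settings the crucial mechanism is that the matrix coefficients of the non-tempered standard module $I(\sigma_i,\lambda_i)$ lie outside $L^2(G(F_S)^1)$, so one may truncate them to arbitrary $L^2$-smallness while still making $\pi(f_i)$ behave like a projector near the target. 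Setting $\Psi = \sum_{i=1}^{N} f_i \ast f_i^{\ast}$ then yields $\widehat{\Psi} \geq 0$ on $\widehat{G(F_S)^1}$, $\widehat{\Psi}|_X \geq 1$, and $\nu(\widehat{\Psi}) = \sum_i \|f_i\|_{L^2}^2 < \varepsilon$.

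The main obstacle is the uniform wave-packet construction in the previous paragraph: one must keep the test functions genuinely smooth, compactly supported, and bi-$K_S$-finite, while simultaneously driving their $L^2$-norms to zero and preserving the uniform lower bound $\|\pi(f_i)\|_{\mathrm{HS}}^2 \geq 1$ across all of $X_i$, not only at the central parameter. This requires the full Paley--Wiener theorem for reductive groups together with a careful asymptotic analysis of characters on the non-tempered locus of the unitary dual; this is the technical core of Sauvageot's original argument and is the step I would expect to absorb most of the work.
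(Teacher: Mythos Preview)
The paper does not supply its own proof of this lemma: it is stated as a direct citation of \cite{Svg97}, Corollaire~6.2, and used as a black box in the proof of Theorem~\ref{tsvg}. There is therefore nothing in the paper to compare your proposal against.

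That said, your sketch is a reasonable outline of the strategy behind Sauvageot's actual argument. The positivity via $\Psi = f \ast f^{\ast}$, the Plancherel identity $\nu(\widehat{\Psi}) = \Psi(1) = \|f\|_{L^2}^2$, and the observation that the Plancherel measure is supported on the tempered dual (so the two constraints $\widehat{\Psi}|_X \geq 1$ and $\nu(\widehat{\Psi}) < \varepsilon$ are decoupled) are all correct and are indeed the conceptual core. You also correctly identify the genuine difficulty: producing, uniformly over a bounded non-tempered set, test functions in $\mathcal{H}(G(F_S)^1)$ whose Hilbert--Schmidt norms are bounded below on $X$ while their $L^2$-norms are small. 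This is the substance of Sauvageot's Sections~5--6 and is not something one can fill in without reproducing a significant portion of that paper. So your proposal is accurate as a roadmap, but it is a roadmap to \cite{Svg97} rather than to anything in the present paper.
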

Given a function $f$ defined on $\widehat{G(F_S)^1}_{\text{temp}}$, we also denote by $f$ the function on $\widehat{G(F_S)^1}$, which is extended by $0$ on the untempered part.  
\begin{lemma}[\cite{Svg97} Theoreme 7.3(b)]\label{ls2}
For $\varepsilon>0$ and any $\nu$-integrable function $f$ on $\widehat{G(F_S)^1}_{\text{temp}}$, there exists $\phi,\psi\in \mathcal{H}(G(F_S)^1)$ such that
\begin{center}
$|f(\pi)-\widehat{\phi}(\pi)|\leq \widehat{\psi}(\pi)$ and $\nu(\widehat{\psi})<\varepsilon$. 
\end{center}
\end{lemma}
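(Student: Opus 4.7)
The plan is to reduce the claim to a uniform approximation problem on a compact subset of the tempered dual, realize the approximant as a Fourier transform $\widehat{\phi}$ of a Hecke function via the trace Paley--Wiener theorem, and then use Lemma \ref{ls1} to absorb the unavoidable spill-over of $\widehat{\phi}$ onto the non-tempered part of $\widehat{G(F_S)^1}$.

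As a first step, I would use the regularity of the measure $\nu$ together with Lusin's theorem to replace $f$ by a continuous, compactly supported function $f_0$ on $\widehat{G(F_S)^1}_{\text{temp}}$, at the cost of an $L^1(\nu)$-error of at most $\varepsilon/3$. More precisely, by monotone convergence I would also fix a non-negative $g_0\in C_c(\widehat{G(F_S)^1}_{\text{temp}})$ with $|f-f_0|\leq g_0$ and $\nu(g_0)<\varepsilon/3$; it then suffices to solve the problem for $f_0$ with an error bound of $\varepsilon/3$, and at the end to add a Hecke function whose Fourier transform dominates $g_0$ on the tempered dual.

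Next I would apply the trace Paley--Wiener theorem (Clozel--Delorme at the Archimedean places of $S$, Bernstein--Deligne--Kazhdan at the non-Archimedean ones), which describes the image of $\mathcal{H}(G(F_S)^1)$ under the Fourier transform restricted to the tempered dual. The density of this image in the Paley--Wiener space produces $\phi\in \mathcal{H}(G(F_S)^1)$ whose Fourier transform is uniformly $\delta$-close to $f_0$ on the compact $\spt(f_0)$ and whose support (as a function on all of $\widehat{G(F_S)^1}$) is contained in a fixed bounded subset $X_1\subset \widehat{G(F_S)^1}$. The uniform tempered error $\delta$ can in turn be controlled by a further Hecke function whose Fourier transform dominates a small multiple of the characteristic function of a neighborhood of $\spt(f_0)$, again obtained from the Paley--Wiener image.

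The main obstacle is that $\widehat{\phi}$ is generically non-zero on the bounded non-tempered set $Y := X_1\setminus \widehat{G(F_S)^1}_{\text{temp}}$, where $\nu$ may carry large mass even though the Plancherel measure vanishes; this is precisely the subtlety that distinguishes Sauvageot's density theorem from an ordinary Fourier approximation statement. Lemma \ref{ls1} is tailored for exactly this issue: it produces $\Psi\in \mathcal{H}(G(F_S)^1)$ with $\widehat{\Psi}\geq 0$ everywhere, $\widehat{\Psi}\geq \|\widehat{\phi}\|_\infty$ on $Y$, and $\nu(\widehat{\Psi})$ arbitrarily small. Summing the three Hecke functions thus produced (one for the untempered spill-over, one for the uniform tempered error, one for the $L^1$-approximation error $g_0$) gives a single $\psi\in \mathcal{H}(G(F_S)^1)$ with $|f(\pi)-\widehat{\phi}(\pi)|\leq \widehat{\psi}(\pi)$ on all of $\widehat{G(F_S)^1}$ and $\nu(\widehat{\psi})<\varepsilon$, as required.
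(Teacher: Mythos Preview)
The paper does not supply its own proof of this lemma; it is quoted verbatim as Sauvageot's Th\'eor\`eme~7.3(b) and then used as a black box in the proof of Theorem~\ref{tsvg}. There is therefore nothing in the paper itself to compare your argument against.

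That said, your outline is a faithful summary of Sauvageot's own proof: the reduction to $C_c\bigl(\widehat{G(F_S)^1}_{\text{temp}}\bigr)$ via Lusin and the regularity of $\nu$, the tempered approximation via the trace Paley--Wiener theorem (Clozel--Delorme, Bernstein--Deligne--Kazhdan), and the use of Lemma~\ref{ls1} to dominate the non-tempered spill-over. One step you pass over too quickly is the assertion that $\widehat{\phi}$ can be arranged to vanish outside a fixed bounded subset $X_1$ of the \emph{whole} unitary dual. The trace Paley--Wiener theorem only describes $\widehat{\phi}$ on $\widehat{G(F_S)^1}_{\text{temp}}$; to confine the non-tempered support one needs the $K_S$-finiteness of $\phi$ together with a bound on the infinitesimal (respectively Bernstein-center) characters that can occur, which is exactly what Sauvageot establishes in the lemmas preceding his Th\'eor\`eme~7.3. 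Without this, $|\widehat{\phi}|$ could be unbounded on the non-tempered part and Lemma~\ref{ls1} alone would not suffice to produce the dominating $\widehat{\psi}$. With that caveat acknowledged, your strategy is the correct one.
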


Here we obtain one of the main results in \cite{Svg97} and we also provide a proof for completeness. 
\begin{theorem}[Sauvageot]\label{tsvg}
Suppose $\lim\limits_{n\to \infty}\nu_{K_n}(\widehat{\phi})=\phi(1)$ for all $\phi\in \mathcal{H}(G(F_S)^1)$. 
We have
\begin{center}
$\lim\limits_{n\to \infty}\nu_{K_n}(X)=\nu(X)$
\end{center}
for all bounded subset $X$ of $\widehat{G(F_S)^1}$. 
\end{theorem}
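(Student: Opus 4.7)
The plan is to sandwich $1_X$ between nonnegative Paley--Wiener transforms $\widehat{\phi}$ with $\phi \in \mathcal{H}(G(F_S)^1)$ and invoke the hypothesis on each such $\phi$. By the Plancherel formula, $\phi(1) = \nu(\widehat{\phi})$ for $\phi \in \mathcal{H}(G(F_S)^1)$, so the hypothesis rephrases as $\nu_{K_n}(\widehat{\phi}) \to \nu(\widehat{\phi})$. Since $\nu$ is supported on the tempered dual, decompose $X = X_t \sqcup X_{nt}$ with $X_t := X \cap \widehat{G(F_S)^1}_{\text{temp}}$, so that $\nu(X) = \nu(X_t)$, and handle the two parts separately.

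For the non-tempered part $X_{nt}$, fix $\varepsilon > 0$ and apply Lemma \ref{ls1} to obtain $\Psi \in \mathcal{H}(G(F_S)^1)$ with $\widehat{\Psi} \geq 0$ on the full dual, $\widehat{\Psi}|_{X_{nt}} \geq 1$, and $\nu(\widehat{\Psi}) < \varepsilon$. Positivity of $\nu_{K_n}$ gives $0 \leq \nu_{K_n}(X_{nt}) \leq \nu_{K_n}(\widehat{\Psi})$, and the right-hand side tends to $\nu(\widehat{\Psi}) < \varepsilon$ by hypothesis; letting $\varepsilon \to 0$ yields $\nu_{K_n}(X_{nt}) \to 0 = \nu(X_{nt})$.

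For the tempered part $X_t$, apply Lemma \ref{ls2} to $f = 1_{X_t}$ (a bounded $\nu$-integrable function since $X$ is bounded) to extract $\phi, \psi \in \mathcal{H}(G(F_S)^1)$ with $|1_{X_t}(\pi) - \widehat{\phi}(\pi)| \leq \widehat{\psi}(\pi)$ on the tempered dual and $\nu(\widehat{\psi}) < \varepsilon$. Since $\widehat{\phi}$ and $\widehat{\psi}$ have bounded support in $\widehat{G(F_S)^1}$ and finite sup-norms, one further invocation of Lemma \ref{ls1}, applied to the non-tempered part of $\mathrm{supp}\,\widehat{\phi} \cup \mathrm{supp}\,\widehat{\psi}$ with tolerance proportional to $\varepsilon/(\|\widehat{\phi}\|_\infty + \|\widehat{\psi}\|_\infty)$, supplies an auxiliary $\Psi' \in \mathcal{H}$ so that the globally nonnegative function
\[
\widehat{\Phi} := \widehat{\psi} + (\|\widehat{\phi}\|_\infty + \|\widehat{\psi}\|_\infty)\,\widehat{\Psi'}
\]
majorizes $|1_{X_t} - \widehat{\phi}|$ on the \emph{entire} unitary dual while $\nu(\widehat{\Phi}) < 2\varepsilon$. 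The triangle inequality then gives
\[
|\nu_{K_n}(X_t) - \nu(X_t)| \leq \nu_{K_n}(\widehat{\Phi}) + |\nu_{K_n}(\widehat{\phi}) - \nu(\widehat{\phi})| + \nu(\widehat{\psi}),
\]
with the first term converging to $\nu(\widehat{\Phi}) < 2\varepsilon$ by hypothesis, the middle term vanishing by hypothesis, and the last bounded by $\varepsilon$. Sending $\varepsilon \to 0$ concludes $\nu_{K_n}(X_t) \to \nu(X_t)$, and combining with the $X_{nt}$ step finishes the proof.

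The main obstacle is the asymmetry in Lemma \ref{ls2}: it controls $|f - \widehat{\phi}|$ only on the tempered dual, while the measures $\nu_{K_n}$ live on the full unitary dual and may concentrate on non-tempered representations. Lemma \ref{ls1} is the bridge that closes this gap, providing cheap nonnegative majorants for any bounded non-tempered subset and allowing the one-sided tempered bound from Lemma \ref{ls2} to be promoted to a global majorant at a cost of $O(\varepsilon)$ in Plancherel mass. The interplay of the two density lemmas, rather than either one alone, is what makes Sauvageot's argument go through.
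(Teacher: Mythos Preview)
Your overall architecture matches the paper's proof exactly: split $X=X_{nt}\sqcup X_t$, kill the non-tempered piece with Lemma~\ref{ls1}, and handle the tempered piece with Lemma~\ref{ls2} plus the hypothesis (rewritten via Plancherel as $\nu_{K_n}(\widehat{\phi})\to\nu(\widehat{\phi})$). The paper's estimate for $X_t$ is the same triangle-inequality chain you wrote, ending with a bound of the form $|\nu_{K_n}(\widehat{\phi})-\phi(1)|+\nu_{K_n}(\widehat{\psi})+\psi(1)$.

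The one substantive discrepancy is your reading of Lemma~\ref{ls2}. As stated in the paper (and in Sauvageot's Th\'eor\`eme~7.3), the inequality $|f(\pi)-\widehat{\phi}(\pi)|\le\widehat{\psi}(\pi)$ holds for \emph{all} $\pi\in\widehat{G(F_S)^1}$, where $f$ is extended by $0$ off the tempered dual. Since $1_{X_t}$ already vanishes on the non-tempered part, this immediately gives $|\nu_{K_n}(X_t)-\nu_{K_n}(\widehat{\phi})|\le\nu_{K_n}(\widehat{\psi})$ with no further work; the paper uses exactly this.

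Because you assumed the bound held only on the tempered dual, you inserted a ``promotion'' step via an auxiliary $\Psi'$. That step, as written, has a gap: the assertion that $\widehat{\phi}$ and $\widehat{\psi}$ have \emph{bounded support} in $\widehat{G(F_S)^1}$ is false in general. For $\phi\in\mathcal{H}(G(F_S)^1)$ the character $\pi\mapsto\tr\pi(\phi)$ is a Paley--Wiener type function in the continuous parameters and does not vanish outside a relatively compact set of the dual, so Lemma~\ref{ls1} cannot be applied to ``the non-tempered part of $\mathrm{supp}\,\widehat{\phi}\cup\mathrm{supp}\,\widehat{\psi}$''. The fix is simply to drop this step and use Lemma~\ref{ls2} on the full dual, which is precisely what the paper does.
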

\begin{proof}
First, we show the contribution from the untempered part is negligible in the limit. 
For a bounded subset $X_0$ of $\widehat{G(F_S)^1}\backslash \widehat{G(F_S)^1}_{\text{temp}}$ and $\varepsilon>0$, we let $\Psi\in \mathcal{H}(G(F_S)^1)$ satisfies Lemma \ref{ls1} with respect to $X$. 
We have $\nu_{K_n}(X)\leq \nu_{K_n}(\widehat{\Psi})\leq |\nu_{K_n}(\widehat{\Psi})-\psi(1)|+\psi(1)<2\varepsilon$ for all $n\geq N_1$ with some $N_1\geq 0$. 

For the tempered part, 
we fix a bounded subset $X_1$ of $\widehat{G(F_S)^1}_{\text{temp}}$ with the same $\varepsilon$ above.
Let $\phi,\psi\in \mathcal{H}(G(F_S)^1)$ satisfy Lemma \ref{ls2} with respect to the function $f=1_{X_1}$ on $\widehat{G(F_S)^1}_{\text{temp}}$ and $\varepsilon$. 
By assumption, we have $|\nu_{K_n}(\widehat{\phi})-\phi(1)|<\varepsilon$ and $|\nu_{K_n}(\widehat{\psi})-\psi(1)|<\varepsilon$ for all $n\geq N_2$ with some $N_2\geq 0$. 
Hence, for $n\geq N_2$, we obtain
\begin{equation*}
\begin{aligned}
|\nu_{K_n}(X_1)-\nu(X_1)|& \leq |\nu_{K_n}(X_1)-\nu_{K_n}(\widehat{\phi})|+|\nu_{K_n}(\widehat{\phi})-\phi(1)|+|\phi(1)-\nu(X)|\\
& \leq |\nu_{K_n}(\widehat{\phi})-\phi(1)|+\nu_{K_n}(\widehat{\psi})+\psi(1)\\
& \leq |\nu_{K_n}(\widehat{\phi})-\phi(1)|+|+|\nu_{K_n}(\widehat{\psi})-\psi(1)|+2\psi(1)<4\varepsilon.
\end{aligned}
\end{equation*}
Hence, for the bounded set $X$ of $\widehat{G(F_S)^1}$, let $X=X_0\sqcup X_1$ be the decomposition into its untempered and tempered parts. 
We have 
\begin{equation*}
\begin{aligned}
|\nu_{K_n}(X)-\nu(X)|&=|\nu_{K_n}(X)-\nu(X_1)|=|\nu_{K_n}(X_1)-\nu(X)|+\nu_{K_n}(X_0)\\
&\leq 4\varepsilon +2\varepsilon=6\varepsilon
\end{aligned}
\end{equation*}
for all $N\geq \max\{N_1,N_2\}$.  
\end{proof}


\section{The von Neumann dimensions of direct integrals}\label{svndim}

\subsection{The group von Neumann algebra and the trace}\label{ssvnalg}

Let $\Gamma$ be a countable group with the counting measure.  
Let $\{\delta_{\gamma}\}_{\gamma\in \Gamma}$ be the usual orthonormal basis of $l^2(\Gamma)$. 
We also let $\lambda$ and $\rho$ be the left and right regular representations of $\Gamma$ on $l^2(\Gamma)$ respectively.
For all $\gamma,\gamma'\in \Gamma$, we have
$\lambda(\gamma')\delta_{\gamma}=\delta_{\gamma'\gamma}$ and $\rho(\gamma')\delta_{\gamma}=\delta_{\gamma\gamma'^{-1}}$. 
Let $\mathcal{L}(\Gamma)$ be the strong operator closure of the complex linear span of $\lambda(\gamma)$'s (or equivalently,  $\rho(\gamma)$'s). 
This is the {\it group von Neumann algebra of $\Gamma$}. 
There is a canonical faithful normal tracial state $\tau_{\Gamma}$, or simply $\tau$, on $\mathcal{L}(\Gamma)$, which is given by
\begin{center}
$\tau(x)=\langle x\delta_e,\delta_e\rangle_{l^2(\Gamma)}$, $x\in \mathcal{L}(\Gamma)$. 
\end{center}
Hence $\mathcal{L}(\Gamma)$ is a finite von Neumann algebra (which must be of type $\text{I}$ or $\text{II}_1$).  

More generally, for a tracial von Neumann algebra $M$ with the trace $\tau$, we consider the GNS representation of $M$ on the Hilbert space constructed from the completion of $M$ with respect to the inner product $\langle x,y\rangle_{\tau}=\tau(xy^*)$. 
The underlying space will be denoted by $L^2(M,\tau)$, or simply $L^2(M)$.
 
Consider a normal unital representation $\pi\colon M\to B(H)$ with both $M$ and $H$ separable.  
There exists an isometry $u\colon H\to L^2(M)\otimes l^2(\mathbb{N})$, which commutes with the actions of $M$:
\begin{center}
$u\circ\pi(x)=(\lambda(x)\otimes\id_{l^2(\mathbb{N})} )\circ u$, $\forall x\in M$,
\end{center}
where $\lambda\colon M\mapsto L^2(M)$ denotes the left action. 
Then $p=uu^*$ is a projection in $B(L^2(M)\otimes l^2(\mathbb{N}))$ such that $H\cong p( L^2(M)\otimes l^2(\mathbb{N}))$. 
We have the following result (see \cite{APintrII1} Proposition 8.2.3). 

\begin{proposition}\label{ptrdim}
The correspondence $H\mapsto p$ above defines a bijection between the set of equivalence classes of left $M$-modules and the set of equivalence classes of projections in $(M'\cap B(L^2(M)))\otimes B(l^2(\mathbb{N}))$. 
\end{proposition}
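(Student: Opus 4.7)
The plan is to check the bijection in four steps---membership of $p$ in the correct commutant, independence of the choice of $u$, injectivity modulo equivalence, and surjectivity---after first identifying the right ambient algebra.

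To begin, I would verify that $p=uu^*$ really lies in $(M'\cap B(L^2(M)))\otimes B(l^2(\mathbb{N}))$. The intertwining relation $u\circ\pi(x)=(\lambda(x)\otimes\id)\circ u$ yields $p(\lambda(x)\otimes\id)=u\pi(x)u^*=(\lambda(x)\otimes\id)p$, so $p$ commutes with $\lambda(M)\otimes 1$. For a tracial $M$ the commutant of $\lambda(M)$ in $B(L^2(M))$ is $JMJ=M'\cap B(L^2(M))$ (standard form), and the commutation theorem for tensor products then identifies $(\lambda(M)\otimes 1)'$ with $(M'\cap B(L^2(M)))\otimes B(l^2(\mathbb{N}))$, as required.

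Next, I would combine independence of $u$ and injectivity into a single partial-isometry argument. Given two intertwining isometries $u_i\colon H_i\to L^2(M)\otimes l^2(\mathbb{N})$ with $p_i=u_iu_i^*$, and an $M$-module isomorphism $T\colon H_1\to H_2$, the operator $v=u_2Tu_1^*$ satisfies $v^*v=p_1$, $vv^*=p_2$, and commutes with $\lambda(M)\otimes 1$, so it lies in the commutant and realizes $p_1\sim p_2$. Conversely, any partial isometry $v$ in the commutant with $v^*v=p_1$, $vv^*=p_2$ makes $u_2^*vu_1\colon H_1\to H_2$ an $M$-linear unitary, giving injectivity. Surjectivity is immediate: for any projection $p$ in the commutant, $H=p(L^2(M)\otimes l^2(\mathbb{N}))$ is a left $M$-module (with $u$ the inclusion) satisfying $uu^*=p$.

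The main obstacle is the existence of the intertwining isometry $u$ for an abstractly given separable $M$-module $H$; this is asserted in the preamble of the proposition but deserves justification. I would supply it by using Zorn's lemma plus separability to decompose $H$ into a countable orthogonal sum of cyclic $M$-submodules $H=\bigoplus_n\overline{\pi(M)\xi_n}$, and embedding each summand into a copy of $L^2(M)$ via the noncommutative Radon--Nikodym theorem for the trace: the normal vector state $\omega_n(x)=\langle\pi(x)\xi_n,\xi_n\rangle$ equals $\tau(\cdot\,h_n)$ for some positive $h_n$ in the predual $L^1(M,\tau)$, whence $h_n^{1/2}\in L^2(M,\tau)$ (using $\|h_n^{1/2}\|_2^2=\tau(h_n)=\omega_n(1)<\infty$) and the assignment $\pi(x)\xi_n\mapsto \lambda(x)h_n^{1/2}$ extends by the trace identity $\omega_n(x^*x)=\tau(h_n^{1/2}x^*xh_n^{1/2})$ to an $M$-linear isometry onto $\overline{\lambda(M)h_n^{1/2}}\subset L^2(M)$. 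The subtle point is that $h_n$ generally lies only in $L^1(M,\tau)$ and not in $M$ itself, so its square root and the resulting cyclic subspace must be interpreted via functional calculus inside the standard form; once this is handled, stacking the embeddings into the countably many copies of $L^2(M)$ produces the required $u$.
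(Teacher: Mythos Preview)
Your argument is correct and is the standard textbook proof of this classical fact. The paper itself does not prove the proposition at all: it simply records it with a citation to \cite{APintrII1}, Proposition~8.2.3 (the Anantharaman--Popa lecture notes on $\mathrm{II}_1$ factors). What you have written is essentially the proof one finds in that reference: identify $(\lambda(M)\otimes 1)'$ via the commutation theorem for tensor products, match unitary equivalence of $M$-modules with Murray--von Neumann equivalence of projections using partial isometries in the commutant, and build the intertwining isometry $u$ by decomposing $H$ into countably many cyclic submodules and embedding each into a copy of $L^2(M)$ through the Radon--Nikodym density of the associated vector state. Your care about $h_n$ lying only in $L^1(M,\tau)$ and interpreting $h_n^{1/2}$ via functional calculus in the standard form is exactly the point that needs attention, and you handle it correctly. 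In short, you have supplied the proof that the paper outsources to the literature.
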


The {\it von Neumann dimension} of the $M$-module $H$ are defined to be $(\tau\otimes \Tr)(p)$ and denoted by $\dim_M(H)$, which takes its value in $[0,\infty]$. 
We have: 
\begin{enumerate}
    \item $\dim_M(\oplus_i H_i)=\sum_i \dim_M(H_i)$. 
    \item $\dim_M(L^2(M))=1$.
\end{enumerate} 
Note $\dim_M(H)$ depends on the trace $\tau$. 
If $M$ is a finite factor, i.e., $Z(M)\cong\mathbb{C}$, there is a unique normal tracial state (see \cite{J83,MvN36}) and we further have: 
\begin{enumerate}
\setcounter{enumi}{2}
    \item $\dim_M(H)=\dim_M(H')$ if and only if $H$ and $H'$ are isomorphic as $M$-modules (provided $M$ is a factor).  
\end{enumerate}
When $M$ is not a factor, there is a $Z(M)$-valued trace which determines the isomorphism class of an $M$-module (see \cite{Bek04}). 

In the following sections, we will consider the group von Neumann algebra $\mathcal{L}(\Gamma)$ with the canonical trace $tr(x)=\langle x\delta_e,\delta_e \rangle$. 
Hence the von Neumann dimension of $\mathcal{L}(\Gamma)$ is the one uniquely determined by this trace. 
Note a discrete group $\Gamma$ is called an infinite conjugacy class (ICC) group if every nontrivial conjugacy class $C_{\gamma}=\{g\gamma g^{-1}|g\in \Gamma\}$, $\gamma\neq e$, is infinite. 
It is well-known that $\mathcal{L}(\Gamma)$ is a $\rm{II}_1$ factor if and only if $\Gamma$ is a nontrivial ICC group.

Now we consider the case that $\Gamma$ is a discrete subgroup of a locally compact unimodular type I group $G$.  
Let $\mu$ be a Haar measure of $G$. A measurable set $D\subset G$ is called a {\it fundamental domain} for $\Gamma$ if $D$ satisfies $\mu(G\backslash \cup_{\gamma\in\Gamma}\gamma D)=0$ and 
$\mu(\gamma_1 D\cap \gamma_2 D)=0$ if $\gamma_1\neq \gamma_2$ in $\Gamma$. 
In this section, we always assume $\Gamma$ is a lattice, i.e., $\mu(D)<\infty$.
The measure $\mu(D)$ is called {\it covolume} of $\Gamma$ and will be denoted by $\covol(\Gamma)$. 
Note the covolume depends on the Haar measure $\mu$ (see Remark \ref{rcovol}). 

There is a natural isomorphism $L^2(G)\cong l^2(\Gamma)\otimes L^2(D,\mu)$ given by
\begin{center}
$\phi\mapsto \sum_{\gamma\in\Gamma}\delta_{\gamma}\otimes \phi_{\gamma}$ with $\phi_{\gamma}(z)=\phi(\gamma\cdot z)$,
\end{center}
where $z\in D$ and $\gamma\in \Gamma$. 
The restriction representation $\lambda_G|_{\Gamma}$ of $\Gamma$ is the tensor product of $\lambda_{\Gamma}$ on $l^2(\Gamma)$ and the identity operator $\id$ on $L^2(D,\mu)$. 
Hence we obtain the von Neumann algebra $\lambda_G(\Gamma)''\cong \mathcal{L}(\Gamma)\otimes \mathbb{C}=\mathcal{L}(\Gamma)$, which will be denoted by $M$ throughout this section. 
Please note $L^2(M)=l^2(\Gamma)$. 



\subsection{A theorem of von Neumann dimension}\label{ssvdim}
Suppose $X$ is a measurable subset of 
$\widehat{G}$ with the Plancherel measure $\nu(X)<\infty$. 
Define
\begin{center}
$H_X=\int_{X}^{\oplus}H_{\pi}d\nu(\pi)$,   
\end{center} 
which is the direct integral of the spaces $H_{\pi}$ with $\pi\in X$. 
It is a module over $G$, its lattice $\Gamma$, and also the group von Neumann algebra $\mathcal{L}(\Gamma)$. 

We state a result on the von Neumann dimension of direct integrals. 
One may refer to \cite{yj22} Section 4 for the proof. 

\begin{theorem}\label{tdimmeas}
Let $G$ be a locally compact unimodular type I group with Haar measure $\mu$. 
Let $\nu$ be the Plancherel measure on the unitary dual $\widehat{G}$ of $G$. 
Suppose $\Gamma$ is a lattice in $G$ and $\mathcal{L}(\Gamma)$ is the group von Neumann algebra of $\Gamma$.  
Let $X\subset\widehat{G}$ such that $\nu(X)<\infty$ and $H_X=\int_X^{\oplus} H_{\pi}d\nu(\pi)$.  
We have 
\begin{center}
$\dim_{\mathcal{L}(\Gamma)}(H_X)=\covol(\Gamma)\cdot \nu(X)$. 
\end{center}
\end{theorem}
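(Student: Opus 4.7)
The plan is to realize $H_X$ as a closed $\Gamma$-invariant subspace of $L^2(G)$ via the Plancherel decomposition, and then compute its von Neumann dimension using the trace characterization of Proposition \ref{ptrdim}. The Plancherel theorem for the unimodular type I group $G$ yields a unitary isomorphism $L^2(G) \cong \int_{\widehat{G}}^\oplus H_\pi \otimes \overline{H_\pi}\, d\nu(\pi)$ intertwining the two-sided regular representation with $\pi \otimes \bar{\pi}$ on each fiber. For $X \subset \widehat{G}$ with $\nu(X) < \infty$, the subspace $L^2(G)_X := \int_X^\oplus H_\pi \otimes \overline{H_\pi}\, d\nu(\pi)$ is $G \times G$-invariant. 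Picking a measurable field $\pi \mapsto \xi_\pi \in \overline{H_\pi}$ of unit vectors on $X$ (which exists because $\{H_\pi\}_{\pi \in \widehat{G}}$ forms a standard measurable field of Hilbert spaces over $\widehat{G}$) gives a $G$-equivariant isometric embedding $\iota \colon H_X \hookrightarrow L^2(G)_X \subset L^2(G)$ defined by $\iota(v)(\pi) = v(\pi) \otimes \xi_\pi$.

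Using the fundamental-domain identification $L^2(G) \cong \ell^2(\Gamma) \otimes L^2(D,\mu)$ from Section \ref{ssvnalg} and a choice of orthonormal basis of $L^2(D)$, the ambient space embeds into the standard free $\mathcal{L}(\Gamma)$-module $L^2(\mathcal{L}(\Gamma)) \otimes \ell^2(\mathbb{N})$, with $\mathcal{L}(\Gamma)$ acting on the first factor. Let $P$ denote the orthogonal projection onto $\iota(H_X)$ inside this free module; by Proposition \ref{ptrdim} we have $\dim_{\mathcal{L}(\Gamma)} H_X = (\tau_\Gamma \otimes \Tr)(P)$, so it suffices to evaluate this trace. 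To do so, I would test against an approximate identity $f_n \in C_c(G)$ converging to the Dirac mass at $e$. The operator $\lambda_G(f_n)$ restricted to $\iota(H_X)$ acts on each fiber as $\pi(f_n) \otimes \mathrm{id}$, and a direct computation using the fundamental-domain decomposition identifies $\tau_\Gamma\bigl(\lambda_G(f_n)|_{\iota(H_X)}\bigr) = \covol(\Gamma) \int_X \tr \pi(f_n)\, d\nu(\pi)$. By the Plancherel inversion formula this converges to $\covol(\Gamma) \cdot \nu(X)$ as $f_n \to \delta_e$, while the left-hand side converges to $\dim_{\mathcal{L}(\Gamma)} H_X$, yielding the desired identity.

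The main obstacle will be the rigorous handling of direct integrals: constructing the measurable field $\pi \mapsto \xi_\pi$ and verifying the $\Gamma$-equivariance of $\iota$ rests on Borel measurability of the Plancherel decomposition, and justifying the interchange of the trace with the limit $f_n \to \delta_e$ requires trace-class estimates on the restricted operators that are uniform in $n$. Tracking the normalization constant $\covol(\Gamma)$ through the identification $L^2(G) \cong \ell^2(\Gamma) \otimes L^2(D,\mu)$ and the Plancherel measure on $\widehat{G}$ is where the main computation lies; as a sanity check, specializing to the case in which $X = \{\pi\}$ is a single discrete series should recover the Goodman--de la Harpe--Jones formula $\dim_{\mathcal{L}(\Gamma)} H_\pi = \covol(\Gamma) \cdot d(\pi)$ recalled earlier in the paper.
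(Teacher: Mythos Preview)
The paper does not contain a proof of Theorem \ref{tdimmeas}; the sentence immediately preceding the statement reads ``One may refer to \cite{yj22} Section 4 for the proof.'' So there is no in-paper argument to compare your proposal against.

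That said, your outline is the natural extension of the Goodman--de la Harpe--Jones argument from a single discrete series to a direct integral, and it is essentially the approach one would expect in the cited reference: embed $H_X$ into $L^2(G)$ via the Plancherel decomposition, pass to the fundamental-domain model $\ell^2(\Gamma)\otimes L^2(D)$, and read off the $(\tau_\Gamma\otimes\Tr)$-trace of the projection. One remark on the execution: the approximate-identity limit $f_n\to\delta_e$ is not really needed and is the source of the uniform trace-class issues you flag. A cleaner route is to observe that the projection $P$ onto $\iota(H_X)$ already lies in $\rho_G(G)''\subset \lambda_G(\Gamma)'\cap B(L^2(G))$, and in the Plancherel picture equals $\int_X \id_{H_\pi}\otimes |\xi_\pi\rangle\langle\xi_\pi|\,d\nu(\pi)$, so the Plancherel trace of $P$ is exactly $\nu(X)$. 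The identity $\dim_{\mathcal{L}(\Gamma)}H_X=\covol(\Gamma)\cdot\nu(X)$ then reduces to checking that $(\tau_\Gamma\otimes\Tr)$ restricted to $\rho_G(G)''$ equals $\covol(\Gamma)$ times the Plancherel trace, which can be verified on a single test operator $\rho_G(f)$ with $f\in C_c(G)$ positive-definite, without any limiting procedure. This sidesteps the convergence issues entirely while keeping the rest of your argument intact.
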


\begin{remark}\label{rcovol}
\begin{enumerate}
\item If $\mu'=k\cdot \mu$ is another Haar measure on $G$ for some $k>0$, the covolumes are related by  $\covol'(\Gamma)=\mu'(G/\Gamma)=k'\cdot\mu(G/\Gamma)=k\cdot\covol(\Gamma)$.
But the induced Plancherel measure $\nu'=k^{-1} \cdot\nu$ and the dependencies cancel out in the formula above. 
    

\item There is a relevant approach by H. Peterson and A. Valette \cite{PetVlt14}.
They study the von Neumann dimension over locally compact groups. 
The group von Neumann algebra is equipped with
a semifinite tracial weight instead of a tracial state for a discrete group. 
It is motivated by the study of $L^2$-Betti number of locally compact groups \cite{PtsH13}.

\end{enumerate}
\end{remark}

If $\pi$ is an atom in $\widehat{G}$, i.e., $\nu(\{\pi\})>0$,  
the irreducible representation $\pi$ is a discrete series and $\nu(\{\pi\})$ is just the formal dimension of $\pi$ \cite{DiCalg,Ro}. 
Under this assumption, if $G$ is a real Lie group that has discrete series and $\Gamma$ is an ICC group,  the theorem reduces to the special case of a single representation (see \cite{GHJ} Theorem 3.3.2)
\begin{center}
$\dim_{\mathcal{L}(\Gamma)}(H_{\pi})=\covol(\Gamma)\cdot d_{\pi}$.  
\end{center}
This is motivated by the geometric construction of discrete series of Lie groups by M. Atiyah and W. Schmid \cite{ASds77}.

\subsection{The proof of the main theorem}

We will prove the main theorem. 
We first give the proof for a tower of uniform lattices. 
\begin{theorem}\label{tmaincpt}[a tower of uniform lattices]
Let $\Gamma_1\supsetneq \Gamma_2 \supsetneq \cdots$ be a normal tower of cocompact lattice in a semisimple real Lie group $G$ such that $\cap_{n\geq 1} \Gamma_n=\{1\}$. 
For any bounded subset $X$ of $\widehat{G}$, we have
\begin{center}
$\lim\limits_{n\to \infty}\cfrac{m(X,\Gamma_n)}{\dim_{\mathcal{L}\Gamma_n}H_X}=1$. 
\end{center}
\end{theorem}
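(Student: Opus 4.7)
The plan is to reduce the statement to a convergence of spectral measures and then invoke Sauvageot's density theorem (Theorem \ref{tsvg}) together with the geometric-side asymptotics of Proposition \ref{pcpttr=}. By Theorem \ref{tdimmeas} the denominator equals $\covol(\Gamma_n)\cdot \nu(X)$, where $\nu$ is the Plancherel measure of $G$. Introducing the spectral measure $\nu_{\Gamma_n}$ defined by $\nu_{\Gamma_n}(X)=m_{\Gamma_n}(X)/\covol(\Gamma_n)$, which is the cocompact analogue of the measure of Section \ref{ssSvg}, the theorem becomes the equidistribution statement
\begin{center}
$\lim\limits_{n\to\infty}\nu_{\Gamma_n}(X)=\nu(X)$
\end{center}
for every bounded measurable $X\subset\widehat{G}$.

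By Theorem \ref{tsvg}, it will suffice to verify $\lim\limits_{n\to\infty}\nu_{\Gamma_n}(\widehat{\phi})=\phi(1)$ for every $\phi\in \mathcal{H}(G)$, where $\widehat{\phi}(\pi)=\tr\pi(\phi)$. For a cocompact $\Gamma_n$, the Selberg trace formula (Corollary \ref{ctfS}) identifies
\begin{center}
$m_{\Gamma_n}(\widehat{\phi})=\sum\limits_{\pi\in\widehat{G}}m_{\Gamma_n}(\pi)\,\theta_\pi(\phi)=\tr R_{\Gamma_n}(\phi)=J_{\Gamma_n}(\phi)$,
\end{center}
so $\nu_{\Gamma_n}(\widehat{\phi})=J_{\Gamma_n}(\phi)/\covol(\Gamma_n)$. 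Since $J_{\{1\},\Gamma_n}(\phi)=\covol(\Gamma_n)\,\phi(1)$, Proposition \ref{pcpttr=} delivers exactly $\lim\limits_{n\to\infty}J_{\Gamma_n}(\phi)/\covol(\Gamma_n)=\phi(1)$, which is the hypothesis of Sauvageot's theorem. The main theorem in the cocompact case then follows by taking the ratio $\nu_{\Gamma_n}(X)/\nu(X)$.

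The only real subtlety, and the step we expect to be the main obstacle, is that Theorem \ref{tsvg} is formulated for the adelic measures $\nu_{K_n}$, whereas the statement to be proved concerns an abstract cocompact tower in a real semisimple Lie group. We anticipate handling this in one of two ways: either by observing that the proof of Theorem \ref{tsvg} uses only the Plancherel decomposition of $L^2(G)$ together with the density of characters of bi-$K_\infty$-finite functions in $\mathcal{H}(G)$, so it transports verbatim to the measures $\nu_{\Gamma_n}$; or, via Lemma \ref{l2m}, by transferring the problem to the adelic side after realising a cofinal subtower as a family of congruence subgroups inside an arithmetic group. In either case, the only substantive ingredients beyond the Plancherel/Sauvageot formalism are the trace-formula identity and the hypothesis $\cap_n\Gamma_n=\{1\}$, which is precisely what drives Proposition \ref{pcpttr=}.
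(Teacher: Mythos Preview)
Your proposal is correct and matches the paper's approach: rewrite the ratio via Theorem \ref{tdimmeas}, reduce to Sauvageot's criterion (Theorem \ref{tsvg}), and verify $\lim_n\nu_{\Gamma_n}(\widehat{\phi})=\phi(1)$ through the Selberg trace formula together with Proposition \ref{pcpttr=}. The adelic-versus-real subtlety you flag is not resolved more carefully in the paper either---it writes the argument in the $\nu_{K_n}$ notation while invoking the Lie-group Proposition \ref{pcpttr=}, tacitly identifying the abstract tower with one of the form $\Gamma_{K_n}$.
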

\begin{proof}
Recall that $m_{\Gamma_K}(X)=\vol(\Gamma_K\backslash G(F_{\infty}))\nu_K(X)$ by definition and $\dim_{\mathcal{L}\Gamma_n}H_X=\vol(\Gamma_K\backslash G(F_{\infty}))\nu(X)$ by Theorem \ref{tdimmeas}. 
We need to show $\lim\limits_{n\to \infty}\nu_{K_{n}}(X)=\nu(X)$, which reduces to
$\lim\limits_{n\to \infty}\nu_{K_{n}}(\widehat{\phi})=\phi(1)$ for all $\phi\in C^{\infty}_{c}(G(F_{\infty})^1)$ by  Theorem \ref{tsvg}. 

From Proposition \ref{ptrmk}, 
we know
\begin{equation*}
\begin{aligned}
    \tr R_{\text{disc}}(\phi\otimes \frac{1_K}{\vol(K)})&=m_K(\widehat{\phi})=\vol(G(\mathbb{Q})\backslash G(\mathbb{A})^1)/K)\cdot \nu_K(\widehat{\phi}), 
\end{aligned}
\end{equation*}
which is to say $\tr R_{\text{disc}}(\phi\otimes 1_K)=\vol(G(\mathbb{Q})\backslash G(\mathbb{A})^1))\cdot \nu_K(\widehat{\phi})$. 
By Proposition \ref{pcpttr=}, we have $\lim\limits_{n\to \infty} \tr R_{\text{disc}}(\phi\otimes 1_{K_{n}})=\vol(G(\mathbb{Q})\backslash G(\mathbb{A})^1))\cdot \phi(1)$. 
Hence $\lim\limits_{n\to \infty}\nu_{K_{n}}(\widehat{\phi})=\phi(1)$.  
\end{proof}

For the non-uniform case, the distribution $J(f)$ in Equation \ref{etf} will no longer be the trace of $R_{\text{disc}}(f)$, which leads to the main task for most arithmetic subgroups. 
Fortunately, Finis-Lapid-M\"{u}ller proved the following result on the limit of the spectral side of Equation \ref{etf} (see \cite{FLM15} Corollary 7.8). 
\begin{theorem}\label{tspeclimitFLM15}[Finis-Lapid-M\"{u}ller]
Suppose $G=\SL(n)$.
Let $\{I_n\}$ be a family of descending integral ideals in $\mathcal{O}_F$ prime to $S$ and  $K_{n}=K(I_n)$ be the compact subgroups of $G(\mathbb{A}^S)$ given by $I_n$.
We have
\begin{center}
    $\lim_{n\to \infty} J(h_S\otimes 1_{K_{n}})=\lim_{n\to \infty}\tr R_{\text{disc}}(h_S\otimes 1_{K_{n}})$ 
\end{center}
for any $h_S\in C^{\infty}_{c}(G(F_S)^1)$. 
\end{theorem}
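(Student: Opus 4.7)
The plan is to start from the full Arthur trace formula $J(f)=\sum_{\chi\in\mathcal{X}}J^T_\chi(f)$ of Theorem \ref{tAtf} and argue that, once $f=h_S\otimes 1_{K_n}$, the entire contribution from $\chi\in\mathcal{X}\setminus\mathcal{X}(G)$ becomes negligible as $n\to\infty$. Since $\sum_{\chi\in\mathcal{X}(G)}J^T_\chi(f)=\tr R_{\mathrm{disc}}(f)$, writing
\begin{equation*}
J(h_S\otimes 1_{K_n})-\tr R_{\mathrm{disc}}(h_S\otimes 1_{K_n})=\sum_{\chi\in\mathcal{X}\setminus\mathcal{X}(G)}J^T_\chi(h_S\otimes 1_{K_n}),
\end{equation*}
the theorem reduces to showing that the right-hand side tends to $0$. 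This is precisely the continuous-spectrum contribution to the spectral side.

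Next I would recall Langlands' explicit formula for the continuous part: each $J^T_\chi(f)$ with $\chi\notin\mathcal{X}(G)$ is a sum, over proper Levi subgroups $M$ and discrete automorphic data $\rho$ on $M$, of integrals over $i\mathfrak{a}_M^*$ of traces of the form
\begin{equation*}
\tr\bigl(\Delta(P,\lambda)\,\rho_{\rho,\lambda}(f)\bigr),
\end{equation*}
where $\Delta(P,\lambda)$ is built from logarithmic derivatives of the standard intertwining operators $M(w,\lambda)$. The quantity I must control is therefore an integral over $i\mathfrak{a}_M^*$, for each proper Levi $M$, of an operator-trace against $h_S\otimes 1_{K_n}$.

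The main obstacle, and the content of the FLM machinery for $G=\SL(n)$, is a uniform polynomial bound on these intertwining traces as $K_n$ shrinks. The strategy is: (i) factor the intertwining operator into a scalar normalizing factor times a unitary normalized operator; (ii) express the scalar factor via Rankin–Selberg $L$-functions on $\GL$, using Mœglin–Waldspurger's description of the residual spectrum of $\GL$, where the needed estimates on $L$-function logarithmic derivatives are known; and (iii) apply the Finis–Lapid–Müller properties (TWN$^+$) and (BD), which are established for $\SL(n)$ in \cite{FLM11a,FLM11b} and provide a polynomial-growth bound on the normalized intertwining operators uniform in $\lambda\in i\mathfrak{a}_M^*$ and in the congruence level.

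Finally, I would combine these ingredients with the fact that $\tr\rho_{\rho,\lambda}(h_S\otimes 1_{K_n})$ factors as $\tr\rho_{\rho,\lambda,S}(h_S)$ times the dimension of $K_n$-fixed vectors in the finite component, where the latter is controlled by $\vol(K_n)^{-1}$ only up to the bounded-density input. The upshot is that the continuous contribution for each proper Levi $M$ can be estimated by a constant (depending on $h_S$) times $[K_1:K_n]^{1-\varepsilon}/[K_1:K_n]=[K_1:K_n]^{-\varepsilon}$ up to logarithmic factors, so it tends to $0$ as $n\to\infty$. Summing over the finitely many proper Levi classes that contribute yields the desired identity. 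The technically hardest step is step (iii): proving the (TWN$^+$) and (BD) properties for $\SL(n)$, since it requires delicate analytic input about Rankin–Selberg $L$-functions and a uniform understanding of the combinatorics of intertwining operators on all proper Levi subgroups.
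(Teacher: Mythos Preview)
The paper does not give its own proof of this theorem: it is quoted verbatim as a result of Finis--Lapid--M\"uller, with the attribution ``see \cite{FLM15} Corollary 7.8,'' and no argument is supplied. So there is nothing in the paper to compare your proposal against.

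That said, your sketch is a reasonable summary of the actual FLM strategy: reduce to showing that the non-discrete spectral terms $\sum_{\chi\notin\mathcal{X}(G)}J^T_\chi(h_S\otimes 1_{K_n})$ tend to zero, express these via Arthur's spectral expansion in terms of logarithmic derivatives of intertwining operators, and control the latter using the (TWN) and (BD) properties together with Rankin--Selberg $L$-function input available for $\GL(n)$ via M\oe glin--Waldspurger. One caution: your final quantitative claim that the continuous contribution is bounded by $[K_1:K_n]^{-\varepsilon}$ up to logarithms is more precise than what you have actually justified in the sketch, and the genuine work in \cite{FLM11a,FLM11b,FLM15} lies exactly in turning the heuristic factorization into uniform estimates that are integrable over $i\mathfrak{a}_M^*$ and summable over cuspidal data; this is not a step one can wave through. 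But as an outline of where the proof lives, your proposal is accurate, and it correctly identifies step (iii) as the technical heart.
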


Then we are able to prove: 

\begin{corollary}\label{tcpt}[principal congruence subgroups in $\SL(n,\mathbb{R})$]
Let $\Gamma_1\supsetneq \Gamma_2 \supsetneq \cdots$ be a tower of principal congruence subgroups in $G=\SL(n,\mathbb{R})$. 
For any bounded subset $X$ of $\widehat{G}$, we have
\begin{center}
$\lim\limits_{n\to \infty}\cfrac{m(X,\Gamma_n)}{\dim_{\mathcal{L}\Gamma_n}H_X}=1$. 
\end{center}
\end{corollary}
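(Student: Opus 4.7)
The plan is to mimic the structure of the proof of Theorem \ref{tmaincpt} for uniform lattices, with the understanding that in the non-uniform setting the distribution $J(f)$ on the geometric side of Arthur's trace formula no longer equals $\tr R_\Gamma(f)$, so two separate limit results must be combined: Corollary \ref{cgeolimit} on the geometric side and Theorem \ref{tspeclimitFLM15} of Finis--Lapid--M\"uller on the spectral side. Since principal congruence subgroups $\Gamma_n$ in $\SL(n,\mathbb{R})$ arise as $\Gamma_{K_n} = G(F)\cap K_n$ for a decreasing tower of principal congruence compact open subgroups $K_n = K(I_n)$ in $G(\mathbb{A}_{\mathrm{fin}})$ with $\cap_n I_n = \{0\}$, I can pass freely between adelic and real-Lie-group formulations using Lemma \ref{l2m} and Corollary \ref{c2trf}.

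First I would use Lemma \ref{l2m} to identify $m(X,\Gamma_n) = m_{K_n}(X)$, and Theorem \ref{tdimmeas} to identify $\dim_{\mathcal{L}\Gamma_n} H_X = \covol(\Gamma_n) \nu(X)$. Since $\covol(\Gamma_n) = \vol(\Gamma_n\backslash G(F_\infty))$, the ratio to be shown to tend to $1$ is exactly $\nu_{K_n}(X)/\nu(X)$ in the notation of Section \ref{ssSvg}. Thus the theorem reduces to the assertion $\lim_{n\to\infty} \nu_{K_n}(X) = \nu(X)$ for every bounded $X\subset \widehat{G(F_\infty)^1}$. Invoking Sauvageot's density theorem (Theorem \ref{tsvg}), this in turn reduces to verifying the test-function statement
\begin{equation*}
\lim_{n\to\infty} \nu_{K_n}(\widehat{\phi}) = \phi(1) \qquad \text{for all } \phi\in \mathcal{H}(G(F_\infty)^1).
\end{equation*}

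To establish this limit I would take a fixed $\phi\in \mathcal{H}(G(F_\infty)^1)$ (which in particular is smooth and compactly supported) and apply Proposition \ref{ptrmk} with $S = V_\infty$ to get
\begin{equation*}
\tr R_{\text{disc}}(\phi\otimes 1_{K_n}) = \vol(G(\mathbb{Q})\backslash G(\mathbb{A})^1)\cdot \nu_{K_n}(\widehat{\phi}).
\end{equation*}
Theorem \ref{tspeclimitFLM15} says $\lim_{n\to\infty}\bigl(J(\phi\otimes 1_{K_n}) - \tr R_{\text{disc}}(\phi\otimes 1_{K_n})\bigr) = 0$ for $G=\SL(n)$, so the spectral quantity is asymptotically equal to the full distribution $J(\phi\otimes 1_{K_n})$ on the geometric side. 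Corollary \ref{cgeolimit} then gives $\lim_{n\to\infty} J(\phi\otimes 1_{K_n}) = \vol(G(F)\backslash G(\mathbb{A})^1)\cdot \phi(1)$. Dividing by $\vol(G(\mathbb{Q})\backslash G(\mathbb{A})^1)$ on both sides yields precisely $\lim \nu_{K_n}(\widehat{\phi}) = \phi(1)$, closing the argument via Sauvageot.

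The main obstacle, and the reason this corollary is not just a trivial reprise of Theorem \ref{tmaincpt}, is the non-compactness of $\Gamma_n\backslash G$: the identity-contribution-dominates-the-geometric-side argument from Proposition \ref{pcpttr=} (which in the compact case also gave $J_\Gamma(f) = \tr R_\Gamma(f)$ for free) must be replaced by the much deeper unipotent-bound of Finis--Lapid--M\"uller underlying Corollary \ref{cgeolimit}, and the equality of spectral side with $\tr R_{\text{disc}}$ must be replaced by the Finis--Lapid--M\"uller bound on the non-discrete spectral contributions (Theorem \ref{tspeclimitFLM15}). These two external inputs do all the heavy lifting; the remaining work is just to string together the Sauvageot reduction with Proposition \ref{ptrmk} and Theorem \ref{tdimmeas}, which is essentially bookkeeping.
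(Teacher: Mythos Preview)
Your proposal is correct and follows essentially the same approach as the paper's proof: reduce via Theorem \ref{tdimmeas} and Sauvageot's density theorem (Theorem \ref{tsvg}) to the test-function limit $\lim_n \nu_{K_n}(\widehat{\phi})=\phi(1)$, then establish the latter by combining Proposition \ref{ptrmk}, the spectral-side result of Finis--Lapid--M\"uller (Theorem \ref{tspeclimitFLM15}), and the geometric-side limit Corollary \ref{cgeolimit}. The only difference is that you spell out the Sauvageot reduction and the identification $m(X,\Gamma_n)/\dim_{\mathcal{L}\Gamma_n}H_X = \nu_{K_n}(X)/\nu(X)$ explicitly, whereas the paper refers back to Theorem \ref{tmaincpt} for that part.
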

\begin{proof}
As shown in Theorem \ref{tmaincpt}, it suffices to prove $\lim\limits_{n\to \infty}\nu_{K_{n}}(\widehat{\phi})=\phi(1)$ for all $\phi\in C^{\infty}_{c}(G(F_{\infty})^1)$. 
By Proposition \ref{ptrmk} and Theorem \ref{tspeclimitFLM15}, we know 
\begin{center}
    $\lim\limits_{n\to \infty}\vol(G(\mathbb{Q})\backslash G(\mathbb{A})^1))\cdot \nu_{K_{n}}(\widehat{\phi})=\lim\limits_{n\to \infty} \tr R_{\text{disc}}(\phi\otimes 1_{K_n})=\lim\limits_{n\to \infty} J(\phi\otimes 1_{K_n})$. 
\end{center}
As $\lim\limits_{n\to \infty}\frac{\vol(G(F)\backslash G(\mathbb{A})^1)\phi(1)}{J(\phi\otimes 1_{K_{n}})}=1$ in Corollary \ref{cgeolimit}, 
we obtain $\lim\limits_{n\to \infty}\nu_{K_n}(\widehat{\phi})=\phi(1)$.  
\end{proof}

\bibliographystyle{abbrv}
\typeout{}
\bibliography{MyLibrary} 

\textit{E-mail address}: \href{mailto:junyang@fas.harvard.edu}{junyang@fas.harvard.edu}\\

{Harvard University, Cambridge, MA 02138, USA}

\end{document}